\newcommand{\F}{\ensuremath{\mathbb{F}} }
\newcommand{\N}{\ensuremath{\mathbb{N}} }
\newcommand{\Z}{\ensuremath{\mathbb{Z}} }
\newcommand{\CFK}{\mathit{CFK}}
\newcommand{\HFK}{\mathit{HFK}}
\newtheorem{theorem}{Theorem}[section]
\newtheorem{proposition}[theorem]{Proposition}
\newtheorem{lemma}[theorem]{Lemma}
\newtheorem{corollary}[theorem]{Corollary}
\theoremstyle{definition}
\newtheorem{definition}[theorem]{Definition}
\newtheorem{algorithm}[theorem]{Algorithm}
\theoremstyle{remark}
\newtheorem*{remark}{Remark}
\newtheorem{example}[theorem]{Example}
\newcommand{\gr}[0]{\text{gr}}
\colorlet{dark green}{green!50!black}
\title{Algebraic realizability of knot Floer-like complexes}
\author{David Popovi\'c}
\address{Department of Mathematics\\
  University of California, Los Angeles}
\email{dpopovic@math.ucla.edu}
\date{January 2023}
\begin{document}
\maketitle
\begin{abstract}
    We study algebraic obstructions to realizability of local equivalence classes of knot-like complexes. We classify local equivalence classes of knot-like complexes over $\F[U,V]$, answering a question of Dai, Hom, Stoffregen and Truong.
\end{abstract}

\section{Introduction}
Knot Floer homology is an invariant of knots and links in 3-manifolds. It was introduced independently by Ozsv\'ath and Szab\'o \cite{ozsvath2004holomorphicKnots} and Rasmussen \cite{rasmussen2003floer} as a refinement of an earlier theory of $3$-manifold invariants Heegaard Floer homology \cite{ozsvath2004holomorphic, ozsvath2004holomorphicSequel}. Since then, different variations of the knot Floer homology package have been found to contain much information about the geometric properties of the knot. For example, knot Floer homology detects the genus of a knot \cite{OS2004holomorphicDisksAndGenusBounds}, its fiberedness \cite{ghiggini2008knot, ni2007knot, juhasz2008floer}, the Thurston norm of a knot complement \cite{OS2008linkThurstonNorm} and has been used to obtain bounds on the unknotting number \cite{OS-HFKandUnknottingNumber} and slice genus \cite{ozsvath2003tau}. 

\vspace{1em}

One area in which the strength of knot Floer homology has been leveraged with particular success has been the study of the smooth knot concordance group $\mathcal{C}$. See \cite{hom2017survey} for a survey article on the subject. Numerous knot concordance invariants have been constructed, for example $\tau$, $\nu$, $\varepsilon$, $\nu^+$, $V_s$, $\overline{V}_0$, $\underline{V}_0$, $\Upsilon(t)$, and most recently $\phi_j$ \cite{ozsvath2003tau, OS-HFKandRationalSurgeries, hom2014epsilon, hom2014nu+, OS-HFKandIntegerSurgeries, hendricks2017involutive, OSS2017upsilon, dai2021more}. Over the years, this development has crystallized the importance of the notion of local equivalence; most of the mentioned invariants factor through the local equivalence group of knot Floer complexes (see the local equivalence in \cite{zemke2019connected} without the involutive part, stable equivalence in \cite{hom2017survey}, or $\nu^+$-equivalence in \cite{kim2016infinite} for some different descriptions of the same concept). 

\vspace{1em}

In their recent paper introducing an infinite family of linearly independent homomorphisms $\phi_j: \mathcal{C} \to \Z$  \cite{dai2021more}, Dai, Hom, Stoffregen, and Truong studied the simplified knot Floer complexes $\CFK_{\mathcal{R}_1}(K)$ over the ring $\mathcal{R}_1=\frac{\F[U,V]}{(UV)}$. The main advantage of setting $UV=0$ is that the resulting local equivalence group of knot Floer complexes over $\mathcal{R}_1$ becomes totally ordered. In turn, this yields a simple classification of knot Floer complexes up to local equivalence in which every class can be described by an even length finite sequence of nonzero integers.

\vspace{1em}

However, not all such sequences correspond to knots. It remains an interesting and difficult question to determine which local equivalence classes can be realized by $\CFK_{\mathcal{R}_1}(K)$ for a knot $K \subset S^3$. Since $\CFK_{\mathcal{R}_1}(K)$ is a mod $UV$ reduction of a chain complex $\CFK_{\F[U,V]}(K)$ over $\F[U,V]$, every realized class should have such a representative. This leads the authors of \cite{dai2021more} to pose  the following purely algebraic question: Which local equivalence classes arise as a mod $UV$ reduction of some chain complex over $\F[U,V]$? A complete classification of these classes is the main result of this paper.
\begin{theorem} \label{theorem}
Let $a_1, \dots, a_{2n}$ be a sequence of nonzero integers. The local equivalence class of $C(a_1, \dots, a_{2n})$ (Def. \ref{def:standard complex}) is algebraically realizable (Def. \ref{def:algebraically realizable}) if and only if the complex $C(a_1, \dots, a_{2n})$ is partially realizable (Def. \ref{def:partially and fully realizable}). The latter can be decided by Algorithm \ref{algorithm} terminating in at most $n^2+n$ steps.

As a special case, the local equivalence class of $C(a_1, \dots, a_{2n})$ is algebraically realizable if $|a_i| \geq 2$ for all $i$.
\end{theorem}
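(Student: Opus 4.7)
The easy direction, that partial realizability of $C(a_1,\dots,a_{2n})$ implies algebraic realizability of its local equivalence class, is immediate from the definitions: a lift $\tilde{C}$ over $\F[U,V]$ with $\tilde{C}/(UV)=C(a_1,\dots,a_{2n})$ witnesses the class on the nose. The substance of the theorem lies in the converse and in the algorithmic content.

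The plan for the converse is to work directly with the standard form. One wishes to promote $C(a_1,\dots,a_{2n})$ to a complex over $\F[U,V]$ by taking the obvious lift of the staircase arrows as $\partial_0$ and then correcting by a $UV$-divisible term $UV\cdot\partial_1$ until $\partial^2=0$. Expanding, the condition
\[
\partial_0^2 + UV(\partial_0\partial_1+\partial_1\partial_0) + (UV)^2\partial_1^2 \;=\; 0
\]
imposes a cocycle-type constraint whose solvability is exactly the obstruction measured by Algorithm \ref{algorithm}. I would set up the algorithm to process the arrows of the staircase in order, fixing $\partial_1$ on a new generator at each iteration and propagating the forced $UV$-error into the remainder of the complex. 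The termination bound $n^2+n$ should fall out of an explicit analysis of how these errors cascade, with each of roughly $n$ outer passes triggering at most $n+1$ inner adjustments before either clearing or detecting an irreducible obstruction.

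The hardest step, I expect, is showing that when a local equivalence class is algebraically realizable via some unrelated complex $\tilde{D}$ over $\F[U,V]$, the specific standard representative $C(a_1,\dots,a_{2n})$ is itself partially realizable. The gap is that $\tilde{D}/(UV)$ and $C(a_1,\dots,a_{2n})$ are only locally equivalent, not isomorphic, and there is no a priori reason the lift of one should give a lift of the other. To bridge it, I would try to lift chosen local maps between $\tilde{D}/(UV)$ and $C(a_1,\dots,a_{2n})$ to maps over $\F[U,V]$ by a step-by-step deformation parallel to the lift construction itself, and then identify any surviving obstruction with the output of Algorithm \ref{algorithm}; this requires arguing that the obstruction is an invariant of the local equivalence class and not an artifact of the chosen representative. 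For the special case $|a_i|\geq 2$, the algorithm should never encounter an obstruction: the large exponents on the staircase arrows leave enough room in $\F[U,V]$ for each $UV$-correction to be absorbed into a permissible differential without creating new errors at subsequent steps, so the procedure succeeds automatically.
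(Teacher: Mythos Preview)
Your proposal rests on a misreading of Definition~\ref{def:partially and fully realizable}: \emph{partially realizable} means a lift to $\mathcal{R}_2=\F[U,V]/(U^2V^2)$, not to $\F[U,V]$. So your ``easy direction'' is not immediate at all. A partial realization gives only $\partial^2\equiv 0\pmod{U^2V^2}$; for instance $C(2,2)$ is trivially partially realizable (the staircase already satisfies $\partial^2=U^2V^2 x_0=0$ in $\mathcal{R}_2$), yet the same staircase has $\partial^2\neq 0$ over $\F[U,V]$, and the paper's Figure~\ref{fig:C(2,2)} shows that realizing its local class requires a genuinely larger complex. In the paper this direction is the substantive one: Lemma~\ref{lemma:partially real->extended partially real} first extends the partial realization to $C(N_1\,|\,a_1,\dots,a_{2n}\,|\,{-N_2})$, and then Lemma~\ref{lemma:extended partially real->local class real} builds a doubled complex $F_\infty$ (two shifted copies of the extended staircase joined by ``blue'' arrows $y_i\to x_i$ and ``green'' corrections $x_i\to y_j$) and finally a variant $G_\infty$ with the correct homology. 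None of this is visible in your outline.

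You also have the difficulty reversed. The implication you call hardest---algebraic realizability of the class $\Rightarrow$ partial realizability of the standard representative---is short in the paper (Lemma~\ref{lemma:local class real->partially real}) and uses no lifting of local maps. One takes any algebraic realization $D_\infty$, reduces mod $UV$, invokes the splitting $D_\infty/(UV)\cong C(a_1,\dots,a_{2n})\oplus A$ from Theorem~\ref{classification}, and then restricts $\partial_{D_2}$ to the $x_i$'s over $\mathcal{R}_2$. The point is that any two-step path leaving and re-entering the $x$-summand must use two diagonal arrows and hence vanishes in $\mathcal{R}_2$; this is exactly why working over $\mathcal{R}_2$ (and not $\F[U,V]$) makes this direction cheap. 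Your deformation/obstruction-theory sketch for the algorithm is in the right spirit, but note that the paper runs it entirely over $\mathcal{R}_2$, where each forced diagonal is uniquely determined; the $n^2+n$ bound is just the edge count of a bipartite graph on the generators, not a cascade analysis. Your special-case reasoning is correct once reinterpreted over $\mathcal{R}_2$: when all $|a_i|\ge 2$, every two-arrow composite already lies in $(U^2V^2)$, so the naive lift is a partial realization with no corrections needed.
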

We stress that the algorithm we designed is very simple. It is geometric in nature, requires no algebraic calculations, and can be carried out for reasonably-sized complexes by hand in a matter of seconds. We give some demonstrations of the algorithm in Section \ref{section:Examples}.

\vspace{1em}

It will follow from the definition of algebraic realizability that the chain complexes realizing local equivalence classes are not just chain complexes over $\F[U,V]$; they share many other properties with knot Floer complexes $\CFK_{\F[U,V]}(K)$. The author is not presently aware of any symmetric algebraically realizable class that does not arise from a knot.

\subsection*{Acknowledgements} I would like to thank Sucharit Sarkar for introducing me to Heegaard Floer homology and for his guidance. This work was partially supported by NSF Grant DMS-1905717.

\section{Background and previous work}\label{Background Section}
The purpose of this section is to establish our notational conventions and give an overview of previous work. Throughout this paper, we work over $\F=\Z/2\Z$.
\subsection{Knot Floer complexes}\label{subsection: Knot Floer complexes}
We begin by reviewing formal aspects of knot Floer homology. The main reference that uses the same terminology is \cite{dai2021more} and the original paper containing the proofs of the properties is \cite{ozsvath2004holomorphicKnots}. 

Let $K \subset S^3$ be a knot. The knot Floer complexes $\CFK_{\F[U,V]}(K)$ have the following algebraic properties.
\begin{enumerate}
    \item \textbf{Gradings}: $\CFK_{\F[U,V]}(K)$ is a finitely generated bigraded chain complex over $\F[U,V]$ with a differential $\partial$ and bigrading $\gr=(\gr_U, \gr_V)$ satisfying $\gr(U)=(-2,0)$, $\gr(V)=(0,-2)$, and $\gr(\partial)=(-1,-1)$.
    \item \textbf{Symmetry}: There is a chain homotopy equivalence
    $$\CFK_{\F[U,V]}(K)\simeq \overline{\CFK_{\F[U,V]}}(K)$$
    where the overline denotes the complex in which the roles of $U$ and $V$ are exchanged and $\gr_U$ and $\gr_V$ are switched. 
    \item \textbf{Homology}: There is an isomorphism
    $$\frac{H_*(\CFK_{\F[U,V]}(K)/U)}{V\text{-torsion}} \cong \F[V]$$
    as bigraded chain complexes over $\F[V]$ where $V$-torsion denotes the torsion subcomplex. The isomorphism stems from the fact that we are considering knots in $S^3$. Setting $U=0$ and taking homology recovers $\HFK^-(S^3, K)$ whose nontorsion part is $\F[V]$, generated by an element $x$ with $\gr_U(x)=0$. There is a similar isomorphism obtained by exchanging the roles of $U$ and $V$.
\end{enumerate}
Frequently, the easiest way to describe knot Floer complexes and their algebraic counterparts defined in the next section is to draw them. The knot Floer generators over $\F$ are drawn on a $2$-dimensional lattice where the $x$-coordinate denotes the $U^{-1}$ power and the $y$-coordinate denotes the $V^{-1}$ power. The arrows connecting the generators indicate the differential $\partial$, as explained below. Since $\gr(\partial)=(-1, -1)$, the Alexander grading $A=\frac{1}{2}(\gr_U-\gr_V)$ is preserved by $\partial$ and $\CFK_{F[U,V]}(K)$ splits as an $\F[UV]$-module (but \emph{not} as an $\F[U,V]$-module) as a direct sum of summands with a constant Alexander grading. Since the pictures corresponding to different summands are all translates of each other, we always only draw the part in the Alexander grading $A=0$ and are intentionally ambiguous about the origin of the plane. Note that even after this restriction, the same letter will appear in infinitely many places. For example, in Figure \ref{fig:infinite CFK(trefoil)}, the generator $c$ is drawn three times in positions $(-1,0)$, $(-2, -1)$, and $(-3, -2)$, and the three locations denote the elements $Uc$, $U^2Vc$, and $U^3V^2c$. The top horizontal arrow between $a$ and $b$ indicates that $\partial Va = UVb$ or equivalently that $\partial a = Ub$.

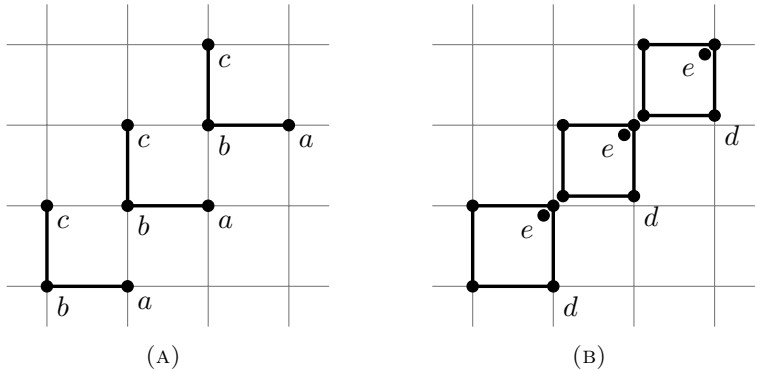
\begin{figure}[t]
    \begin{subfigure}[b]{0.35\textwidth}
        \centering
        \resizebox{\linewidth}{!}{
        \begin{tikzpicture}[scale=1.0]
            \draw[step=1.0,gray,thin] (-0.5,-0.5) grid (3.5,3.5);
            \draw[black, very thick] (1,0) -- (0,0) -- (0,1);
            \draw[black, very thick] (2,1) -- (1,1) -- (1,2);
            \draw[black, very thick] (3,2) -- (2,2) -- (2,3);
            
            \filldraw[black] (0,0) circle (2pt) node[anchor=north west]{$b$};
            \filldraw[black] (1,0) circle (2pt) node[anchor=north west]{$a$};
            \filldraw[black] (0,1) circle (2pt) node[anchor=north west]{$c$};

            \filldraw[black] (1,1) circle (2pt) node[anchor=north west]{$b$};
            \filldraw[black] (2,1) circle (2pt) node[anchor=north west]{$a$};
            \filldraw[black] (1,2) circle (2pt) node[anchor=north west]{$c$};

            \filldraw[black] (2,2) circle (2pt) node[anchor=north west]{$b$};
            \filldraw[black] (3,2) circle (2pt) node[anchor=north west]{$a$};
            \filldraw[black] (2,3) circle (2pt) node[anchor=north west]{$c$};
        \end{tikzpicture}}
        \caption{}
        \label{fig:infinite CFK(trefoil)}
    \end{subfigure}
    \hspace{1cm}
    \begin{subfigure}[b]{0.35\textwidth}
        \centering
        \resizebox{\linewidth}{!}{
        \begin{tikzpicture}[scale=1.0]
            \def\a{0.12}
            \def\b{0.17}
            \draw[step=1.0,gray,thin] (-0.5,-0.5) grid (3.5,3.5);
            \draw[black, very thick] (1,0) -- (0,0) -- (0,1) -- (1,1) -- (1,0);
            \draw[black, very thick] (2,1+\a) -- (1+\a,1+\a) -- (1+\a,2) -- (2,2) -- (2,1+\a);
            \draw[black, very thick] (3,2+\a) -- (2+\a,2+\a) -- (2+\a,3) -- (3,3) -- (3,2+\a);
            
            \filldraw[black] (0,0) circle (2pt) node[anchor=north west]{};
            \filldraw[black] (1,0) circle (2pt) node[anchor=north west]{$d$};
            \filldraw[black] (0,1) circle (2pt) node[anchor=north west]{};
            \filldraw[black] (1,1) circle (2pt) node[anchor=north west]{};
            \filldraw[black] (1-\a, 1-\a) circle (2pt) node[anchor=north east]{$e$};

            \filldraw[black] (1+\a,1+\a) circle (2pt) node[anchor=north west]{};
            \filldraw[black] (2,1+\a) circle (2pt) node[anchor=north west]{$d$};
            \filldraw[black] (1+\a,2) circle (2pt) node[anchor=north west]{};
            \filldraw[black] (2,2) circle (2pt) node[anchor=north west]{};
            \filldraw[black] (2-\a,2-\a) circle (2pt) node[anchor=north east]{$e$};

            \filldraw[black] (2+\a,2+\a) circle (2pt) node[anchor=north west]{};
            \filldraw[black] (3,2+\a) circle (2pt) node[anchor=north west]{$d$};
            \filldraw[black] (2+\a,3) circle (2pt) node[anchor=north west]{};
            \filldraw[black] (3,3) circle (2pt) node[anchor=north west]{};
            \filldraw[black] (3-\a,3-\a) circle (2pt) node[anchor=north east]{$e$};
        \end{tikzpicture}}
        \caption{}
        \label{fig:infinite CFK(figure 8 knot)}
    \end{subfigure}
    \caption{The knot Floer complexes of the negative trefoil ($\textsc{a}$) and the figure-eight knot ($\textsc{b}$) with some generators labelled.} 
    \label{fig:infinite CFK}
\end{figure}

The knot Floer complexes of the negative trefoil and the figure-eight knot drawn in Figure \ref{fig:infinite CFK} consist of a finite piece and $\Z$ many of its translations. As such, it is sufficient to only draw the finite piece, which by itself uniquely specifies the chain complex. The corresponding finite pictures of the negative trefoil and the figure-eight knot are shown in Figure \ref{fig:finite CFK}.

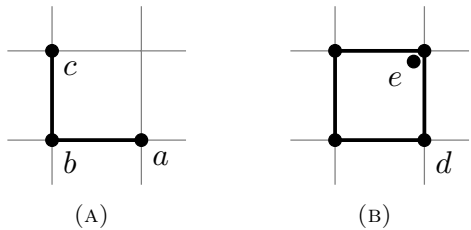
\begin{figure}[t]
    \begin{subfigure}[b]{0.20\textwidth}
        \centering
        \resizebox{\linewidth}{!}{
        \begin{tikzpicture}[scale=1.0]
            \draw[step=1.0,gray,thin] (-0.5,-0.5) grid (1.5,1.5);
            \draw[black, very thick] (1,0) -- (0,0) -- (0,1);
            \filldraw[black] (0,0) circle (2pt) node[anchor=north west]{$b$};
            \filldraw[black] (1,0) circle (2pt) node[anchor=north west]{$a$};
            \filldraw[black] (0,1) circle (2pt) node[anchor=north west]{$c$};
        \end{tikzpicture}}
        \caption{}
        \label{fig:CFK(trefoil)}
    \end{subfigure}
    \hspace{1cm}
    \begin{subfigure}[b]{0.20\textwidth}
        \centering
        \resizebox{\linewidth}{!}{
        \begin{tikzpicture}[scale=1.0]
            \draw[step=1.0,gray,thin] (-0.5,-0.5) grid (1.5,1.5);
            \def\a{0.12}
            \draw[black, very thick] (1,0) -- (0,0) -- (0,1) -- (1,1) -- (1,0);
            \filldraw[black] (0,0) circle (2pt) node[anchor=north west]{};
            \filldraw[black] (1,0) circle (2pt) node[anchor=north west]{$d$};
            \filldraw[black] (0,1) circle (2pt) node[anchor=north west]{};
            \filldraw[black] (1,1) circle (2pt) node[anchor=north west]{};
            \filldraw[black] (1-\a,1-\a) circle (2pt) node[anchor=north east]{$e$};
        \end{tikzpicture}}
        \caption{}
        \label{fig:CFK(figure 8 knot)}
    \end{subfigure}
    \caption{The finite pieces of the knot Floer complexes of the negative trefoil ($\textsc{a}$) and the figure-eight knot ($\textsc{b}$).}
    \label{fig:finite CFK}
\end{figure}

In general, however, it can happen that the complex is \emph{essentially infinite}, that is, it does not split into finite pieces. In such cases, we draw pictures large enough to completely specify the differential $\partial$. These may sometimes contain the same label twice, for example in Figure \ref{fig:C(2,2)}, but there is no reason to be alarmed. We emphasize that the two $z$'s appearing in that picture represent different elements $z$ and $UVz$ and there is \emph{a priori} no reason why these would not be connected in the knot Floer complex. This phenomenon is discussed in more detail in \cite{popovic2023link}, where the author constructs algebraic complexes sharing many properties with $\CFK_{\F[U,V]}(K)$ which remain infinite after any change of basis or even chain homotopy equivalence.

While the knot Floer complexes of the trefoil and the figure-eight knot depicted in Figure \ref{fig:infinite CFK} and Figure \ref{fig:finite CFK} only contain horizontal and vertical arrows, diagonal arrows will also be present in general. In that case, setting $UV=0$ corresponds to deleting all diagonal arrows and setting $U^2V^2=0$ corresponds to deleting diagonal arrows that move in both directions by at least $2$.

Finally, we note that our pictures are equivalent to the pictures obtained in a more classical setting of knot Floer homology \cite{ozsvath2004holomorphicKnots}. To be precise; the generators have different decorations by monomials, but the shapes are the same as the shapes obtained upon consideration of knot Floer complexes $\CFK^-(K)$ over $\F[U]$ in which the $x$-axis represents the $U^{-1}$ power and the $y$-axis represents the Alexander grading $A$.

\subsection{Algebraic complexes}
With the formal properties of $\CFK_{F[U,V]}(K)$ in mind, we establish an algebraic framework for dealing with such complexes.

\vspace{1em}

Let the ring $\F[U,V]$ be equipped with a relative $\Z \oplus \Z$ grading $\gr=(\gr_U, \gr_V)$, where $\gr(U) = (-2, 0)$ and $\gr(V)=(0, -2)$. Any quotient of $\F[U,V]$ by an ideal generated by homogeneous elements inherits the relative grading $\gr$ in a natural way. The cases we will be considering most often are when the quotient is $\mathcal{R}_i := \frac{\F[U,V]}{(U^iV^i)}$ for $i \in \N$ and when it is $\mathcal{R}_\infty := \F[U,V]$ itself. Any module $C$ over these relatively graded rings is automatically considered $\Z \oplus \Z$ graded and any endomorphism of $C$ that is denoted by $\partial$ is required to have degree $(-1, -1)$.
\begin{definition}
    Let $i \in \N\cup\{\infty\}$. A free $\mathcal{R}_i$-module with an endomorphism and a distinguished basis is a triple $(C, \partial, B)$ where $C$ is a free $\mathcal{R}_i$-module, $\partial: C \to C$ is an endomorphism, and $B$ is a basis of $C$.

    If $\partial^2=0$, we call such a triple a free chain complex over $\mathcal{R}_i$ with a distinguished basis.
\end{definition}
\begin{remark}
    We sometimes refer to $(C, \partial, B)$ just by $C$, leaving $\partial$ and $B$ implicit.
\end{remark}

Let $i, j \in \N\cup\{\infty\}$ be such that $i \leq j$. Note that any free chain complex over $\mathcal{R}_i$ with a distinguished basis $(C, \partial, B)$ can be lifted to a free $\mathcal{R}_j$-module with an endomorphism and a distinguished basis $(\widehat{C}, \widehat{\partial}, B)$ where $\widehat{C} = \mathcal{R}_j\langle B\rangle$ is the free $\mathcal{R}_j$-module generated by $B$ and $\widehat{\partial}$ is defined on $B$ in the same way as $\partial$. Since most of our work is concerned with lifting chain complexes over $\mathcal{R}_1$ to chain complexes over $\mathcal{R}_\infty = \F[U,V]$, sometimes passing through $\mathcal{R}_2$ on the way, this construction will be used throughout the paper. It will also be convenient to have the following definition.
\begin{definition}
    Let $i \in \N\cup\{\infty\}$ and let $(C, \partial, B)$ be a free $\mathcal{R}_i$-module with an endomorphism and a distinguished basis. For basis elements $x, y \in B$ and $a, b \in \N_0$, let $\langle \partial x, U^aV^by\rangle \in \F$ denote the coefficient of $U^aV^by$ in $\partial x$.
\end{definition}
We have so far introduced the necessary terminology for dealing with the `Gradings' part of the list in Subsection \ref{subsection: Knot Floer complexes}. Encoding the `Symmetry' and `Homology' properties of knot Floer complexes into a purely algebraic language is more straightforward. 
\begin{definition}
    Let $i\in \N \cup \{\infty\}$. A free $\mathcal{R}_i$-module with an endomorphism $(C, \partial)$ is \emph{symmetric} if $C \simeq \overline{C}$ where the overline denotes the complex in which the roles of $U$ and $V$ are interchanged and $\gr_U$ and $\gr_V$ are switched.
\end{definition}
\begin{definition}
    Let $i\in \N \cup \{\infty\}$. A free $\mathcal{R}_i$-module with an endomorphism $(C, \partial)$ has the \emph{correct homology} if $\frac{H_*(C/U)}{V-\text{torsion}} \cong \F[V]$ and $\frac{H_*(C/V)}{U-\text{torsion}} \cong \F[U]$, where the element $x$ generating $\F[V]$ satisfies $\gr_U(x)=0$ and the element $y$ generating $\F[U]$ satisfies $\gr_V(y)=0$.
\end{definition}

We now define knot-like complexes and standard complexes that were first introduced in \cite{dai2021more}.
\begin{definition}
    A \emph{knot-like complex} $(C, \partial)$ is a finitely generated free chain complex over $\mathcal{R}_1$ with the correct homology.
\end{definition}
Note that knot-like complexes are not required to be symmetric.
\begin{definition}\label{def:standard complex}
Let $n \in \N$ and let $a_1, \dots, a_{2n}$ be a sequence of nonzero integers. The \emph{standard complex} $C(a_1, \dots, a_{2n})$ is the free chain complex over $\mathcal{R}_1$ with a distinguished basis $(C, \partial, B)$ where $B=\{x_0, \dots, x_{2n}\}$ and the differential $\partial$ is as follows. For each odd $i$, there is a $U$-arrow of length $|a_i|$ connecting $x_i$ and $x_{i-1}$. For each even $i$, there is a $V$-arrow of length $|a_i|$ connecting $x_i$ and $x_{i-1}$. The direction of the arrow is determined by the sign of $a_i$, as follows. If $a_i > 0$, then the arrow goes from $x_i$ to $x_{i-1}$, and if $a_i < 0$, then the arrow goes from $x_{i-1}$ to $x_i$. Finally, we equip the distinguished basis elements $x_0, \dots, x_{2n}$ with the unique $\Z\oplus\Z$ gradings making $C(a_1, \dots, a_{2n})$ into a knot-like complex.
\end{definition}
It will also be useful in our discussion to have the notion of a standard complex with two extra arrows adjacent to its endpoints. We call them extended standard complexes.
\begin{definition}
Let $n \in \N$ and let $a_0, \dots, a_{2n+1}$ be a sequence of nonzero integers. The \emph{extended standard complex} $C(a_0 \ | \ a_1, \dots, a_{2n} \ | \ a_{2n+1})$ is the free chain complex over $\mathcal{R}_1$ with a distinguished basis $(C, \partial, B)$ where $B=\{x_{-1}, \dots, x_{2n+1}\}$ and the differential $\partial$ is as follows. For each odd $i$, there is a $U$-arrow of length $|a_i|$ connecting $x_i$ and $x_{i-1}$. For each even $i$, there is a $V$-arrow of length $|a_i|$ connecting $x_i$ and $x_{i-1}$. The direction of the arrow is determined by the sign of $a_i$, as follows. If $a_i > 0$, then the arrow goes from $x_i$ to $x_{i-1}$, and if $a_i < 0$, then the arrow goes from $x_{i-1}$ to $x_i$. The $\Z \oplus \Z$ gradings of the distinguished basis elements $x_0, \dots, x_{2n}$ are the same as in $C(a_1, \dots, a_{2n})$ and the $\Z\oplus\Z$ gradings of $x_{-1}$ and $x_{2n+1}$ are then uniquely determined by the requirement that $\partial$ have degree $(-1,-1)$. 
\end{definition}

For understanding the notions of standard and extended standard complexes, Figure \ref{fig:examples of std and ext std cxs} is likely to be a lot more illuminating than the preceding definitions.
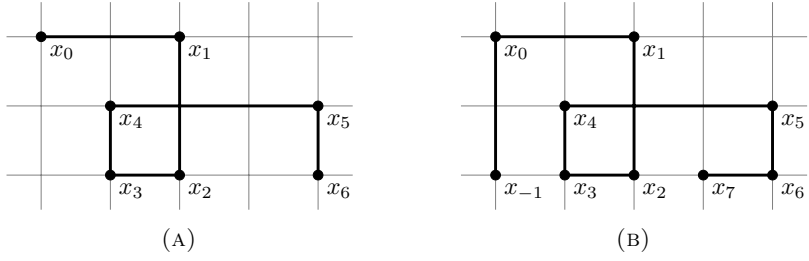
\begin{figure}[t]
    \begin{subfigure}[b]{0.38\textwidth}
        \centering
        \resizebox{\linewidth}{!}{
        \begin{tikzpicture}[scale=1.0]
            \draw[step=1.0,gray,thin] (-0.5,1.5) grid (4.5,4.5);
            \draw[black, very thick] (0,4) -- (2,4) -- (2,2) -- (1,2) -- (1,3) -- (4,3) -- (4,2);
            \filldraw[black] (0,4) circle (2pt) node[anchor=north west]{$x_0$};
            \filldraw[black] (2,4) circle (2pt) node[anchor=north west]{$x_1$};
            \filldraw[black] (2,2) circle (2pt) node[anchor=north west]{$x_2$};
            \filldraw[black] (1,2) circle (2pt) node[anchor=north west]{$x_3$};
            \filldraw[black] (1,3) circle (2pt) node[anchor=north west]{$x_4$};
            \filldraw[black] (4,3) circle (2pt) node[anchor=north west]{$x_5$};
            \filldraw[black] (4,2) circle (2pt) node[anchor=north west]{$x_6$};
        \end{tikzpicture}}
        \caption{}
        \label{fig:example of a standard complex}
    \end{subfigure}
    \hspace{1cm}
    \begin{subfigure}[b]{0.38\textwidth}
        \centering
        \resizebox{\linewidth}{!}{
        \begin{tikzpicture}[scale=1.0]
            \draw[step=1.0,gray,thin] (-0.5,1.5) grid (4.5,4.5);
            \draw[black, very thick] (0,2) -- (0,4) -- (2,4) -- (2,2) -- (1,2) -- (1,3) -- (4,3) -- (4,2) -- (3,2);
            \filldraw[black] (0,2) circle (2pt) node[anchor=north west]{$x_{-1}$};
            \filldraw[black] (0,4) circle (2pt) node[anchor=north west]{$x_0$};
            \filldraw[black] (2,4) circle (2pt) node[anchor=north west]{$x_1$};
            \filldraw[black] (2,2) circle (2pt) node[anchor=north west]{$x_2$};
            \filldraw[black] (1,2) circle (2pt) node[anchor=north west]{$x_3$};
            \filldraw[black] (1,3) circle (2pt) node[anchor=north west]{$x_4$};
            \filldraw[black] (4,3) circle (2pt) node[anchor=north west]{$x_5$};
            \filldraw[black] (4,2) circle (2pt) node[anchor=north west]{$x_6$};
            \filldraw[black] (3,2) circle (2pt) node[anchor=north west]{$x_7$};
        \end{tikzpicture}}
        \caption{}
        \label{fig:example of an extended standard complex}
    \end{subfigure}
    \caption{Standard complex $C(2, -2, -1, 1, 3, -1)$ in ($\textsc{a}$) and an extended standard complex $C(2\ | \ 2, -2, -1, 1, 3, -1 \ |-1)$ in ($\textsc{b}$).}
    \label{fig:examples of std and ext std cxs}
\end{figure}
Note that any standard complex is a knot-like complex, but extended standard complexes are not knot-like complexes, since their homology is not correct.

\vspace{1em}

Standard complexes are interesting primarily because they classify local equivalence types of knot-like complexes.
\begin{theorem}[\cite{dai2021more}, Corollary 6.2]\label{classification}
Let $C$ be a knot-like complex. Then there exists a unique standard complex $C(a_1, \dots, a_{2n})$ such that $C$ splits as a direct sum $C \cong C(a_1, \dots, a_{2n}) \oplus A$ for some chain complex $A$ over $\mathcal{R}_1$.
\end{theorem}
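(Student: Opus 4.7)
The plan is to prove existence and uniqueness of the standard summand separately, treating the existence as a greedy extraction algorithm and uniqueness via a local-equivalence argument.

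For existence, I would first put $C$ into a reduced form by iteratively applying the cancellation lemma: whenever $\langle \partial x, y\rangle = 1$ for two basis elements $x, y$ (i.e.\ no $U$ or $V$ factor), cancel the pair via an elementary change of basis. Since $C$ is finitely generated this terminates, producing a chain homotopy equivalent model (still denoted $C$) in which every differential coefficient between basis elements carries at least one factor of $U$ or $V$. In this reduced model, the correct-homology hypothesis forces a canonical basis element $x_0$ in grading $(0,\cdot)$ whose image generates the $\F[V]$ nontorsion of $H_*(C/U)$. Since $x_0$ must then be $U$-torsion in $H_*(C/V)$, there is a basis element involved in a $U$-arrow with $x_0$; pick $x_1$ realising the minimal $U$-exponent $|a_1|$ (with $a_1$ positive if the arrow points into $x_0$ and negative otherwise), and perform a change of basis to absorb any other $U$-arrows incident to $x_0$ into the complement. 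Alternating the argument between $U$ and $V$, I would extract $x_2, x_3, \ldots$ with integers $a_2, a_3, \ldots$, stopping at $x_{2n}$ when the element generates the $\F[U]$ nontorsion of $H_*(C/V)$. Finite generation of $C$ guarantees $n<\infty$.

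For uniqueness, suppose $C \cong C(a_1, \ldots, a_{2n}) \oplus A \cong C(b_1, \ldots, b_{2m}) \oplus A'$. Since a standard complex is itself knot-like, it already contributes the full $\F[V]$ of nontorsion to $H_*(C/U)$, so $H_*(A/U)$ is entirely $V$-torsion; symmetrically $H_*(A/V)$ is entirely $U$-torsion. Consequently the projection $C \twoheadrightarrow C(a_1,\ldots,a_{2n})$ and the inclusion in the other direction are both local equivalences, and composing with the analogous maps for $A'$ yields a local equivalence $C(a_1,\ldots,a_{2n}) \sim C(b_1,\ldots,b_{2m})$. The result then reduces to the (separate) fact from \cite{dai2021more} that distinct sequences of nonzero integers give locally inequivalent standard complexes; there, each $a_i$ is recovered invariantly as the minimum $U$- or $V$-power needed to carry $x_0$ through $i$ steps of the differential in any local representative, which forces $n=m$ and $a_i=b_i$.

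The main obstacle is the inductive bookkeeping in the existence step: after extracting $x_0, \ldots, x_i$ one must verify that the subcomplex they span is honestly a direct summand, with \emph{only} the alternating chain of arrows of lengths $|a_1|,\ldots,|a_i|$ between consecutive $x_j$'s. The greedy choice of $x_{i+1}$ will typically introduce stray higher-order $U$- or $V$-power arrows to earlier $x_j$'s or to elements outside the chosen set, and absorbing these into the complement requires a change of basis that respects the previously fixed portion of the standard basis and the grading. Checking that such corrections can always be made consistently, without unravelling earlier steps, is the technical heart of the proof; the rest of the argument is then a matter of verifying that the homological termination conditions at $x_0$ and $x_{2n}$ are compatible with the alternating $U$/$V$ structure.
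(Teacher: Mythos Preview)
The paper does not supply its own proof of this statement: it is cited as Corollary~6.2 of \cite{dai2021more}, with only the remark that those authors in fact prove the isomorphism $\cong$ rather than merely $\simeq$. There is therefore no in-paper argument to compare your proposal against.

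That said, your outline is broadly the strategy carried out in \cite{dai2021more}: pass to a reduced model, then greedily extract an alternating $U$/$V$ chain beginning at a generator of the nontorsion tower, with uniqueness deduced from the local-equivalence classification of standard complexes. Two small cautions. First, since the claim is an \emph{isomorphism} $C \cong C(a_1,\dots,a_{2n})\oplus A$, your cancellation step should be phrased as splitting off acyclic two-term summands via change of basis (and retaining them in $A$), not as replacing $C$ by a strictly smaller homotopy-equivalent model. Second, in a reduced model there is in general no \emph{canonical} basis element $x_0$; one must argue that after a suitable change of basis some generator simultaneously represents the $\F[V]$-tower and has the required $U$-arrow structure. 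You have correctly located the real work: the inductive bookkeeping showing that each successive change of basis absorbs stray arrows into the complement without disturbing the portion of the standard chain already built. In \cite{dai2021more} this is managed via an explicit extension lemma together with the total order on standard complexes, which dictates the choice at each step and guarantees termination; your proposal gestures at this but would need that machinery (or an equivalent) to be made rigorous.
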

Note that the exact phrasing of the theorem in \cite{dai2021more} is slightly different. In particular, the authors only claim $C \simeq C(a_1, \dots, a_{2n}) \oplus A$, but they actually prove the above stronger version.
\begin{remark}
We emphasize that knot-like complexes are chain complexes over $\mathcal{R}_1$. The splitting in Theorem \ref{classification} only works over $\mathcal{R}_1$ and, indeed, is the main reason for working over this smaller ring as opposed to $\mathcal{R}_\infty=\F[U,V]$.
\end{remark}

\section{Algebraic realizability}
In this section, we introduce the notion of algebraic realizability and classify algebraically realizable local equivalence classes of knot Floer complexes.

\vspace{1em}

There are two closely related notions of realizability one can explore. Given a knot-like complex $C$, can it be realized as $\CFK_{\mathcal{R}_1}(K)$ up to local equivalence? What about up to chain homotopy equivalence? Both of these questions are difficult to answer. Some restrictions on the chain homotopy type can be found in \cite{hedden2018geography} and \cite{krcatovich2018restriction}. In this paper, we explore the restrictions on local equivalence classes. Following the list of algebraic properties of knot Floer complexes $\CFK_{\F[U,V]}(K)$ in Section \ref{subsection: Knot Floer complexes}, we give the following definition.

\begin{definition}\label{def:partially and fully realizable}
A free $\mathcal{R}_1$-module with an endomorphism and a distinguished basis $(C, \partial_C, B)$ is
\begin{itemize}[label = --]
    \item \emph{partially realizable} if it is a mod $UV$ reduction of a free chain complex over $\mathcal{R}_2$ with a distinguished basis $(D, \partial_D, B)$,
    \item \emph{fully realizable} if it is a mod $UV$ reduction of a free chain complex over $\mathcal{R}_\infty$ with a distinguished basis $(D, \partial_D, B)$.
\end{itemize}
A chain complex $D$ over $\mathcal{R}_2$ or $\mathcal{R}_\infty$ whose mod $UV$ reduction is $C$ is called a \emph{partial realization} or a \emph{full realization} of $C$.
\end{definition}
\begin{definition}\label{def:algebraically realizable}
A (symmetric) knot-like complex $C$ is \emph{algebraically realizable} if it is fully realizable and its full realization (is symmetric and) has the correct homology.

A local equivalence class of knot-like complexes is \emph{algebraically realizable} if one of its representatives is algebraically realizable.
\end{definition}

\begin{example}
The local equivalence class of $C(1, -1, 3, -2)$ is algebraically realizable because $C(1, -1, 3, -2)$ lifts to a $\Z\oplus\Z$ graded chain complex over $\F[U,V]$ with the correct homology. See Figure \ref{fig:C(1,-1,3,-2)}.
\begin{figure}[t]
    \centering
    \begin{tikzpicture}[scale=0.9]
    \draw[step=1.0,gray,thin] (0.5,0.5) grid (5.5,4.5);
    \draw[black, very thick] (1, 4) -- (2, 4) -- (2, 3) -- (5, 3) -- (5, 1);
    \filldraw[black] (1,4) circle (2pt) node[anchor=south east]{$x_0$};
    \filldraw[black] (2,4) circle (2pt) node[anchor=south east]{$x_1$};
    \filldraw[black] (2,3) circle (2pt) node[anchor=south east]{$x_2$};
    \filldraw[black] (5,3) circle (2pt) node[anchor=south east]{$x_3$};
    \filldraw[black] (5,1) circle (2pt) node[anchor=south east]{$x_4$};
    \end{tikzpicture}
    \caption{The complex $C(1,-1,3,-2)$ and its local equivalence class are algebraically realizable.}
    \label{fig:C(1,-1,3,-2)}
\end{figure}
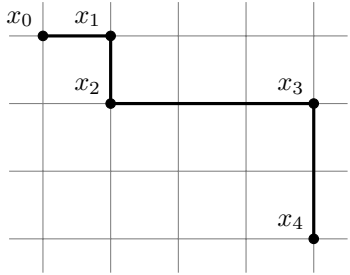

More generally, let $a_1, \dots, a_{2n}$ be any sequence of nonzero integers with alternating signs. Then the complex $C(a_1, \dots, a_{2n})$ lifts to a $\Z\oplus\Z$ graded chain complex over $\F[U,V]$ with the correct homology. It follows that the local equivalence class of $C(a_1, \dots, a_{2n})$ is algebraically realizable.
\end{example}

\begin{example}
The local equivalence class of $C(2,2)$ is algebraically realizable. To see this, observe that Figure \ref{fig:C(2,2)} depicts a bigraded chain complex over $\F[U,V]$ with the correct homology whose mod $UV$ reduction is locally equivalent to $C(2,2)$.
\begin{figure}[t]
    \centering
    \begin{tikzpicture}[scale=0.9]
    \draw[step=1.0,gray,thin] (0.5,0.5) grid (6.5,5.5);
    \draw[black, very thick] (1, 1) -- (4, 1) -- (4, 3) -- (6, 3) -- (6, 5) -- (2,5) -- (2,2) -- (5,4);
    \draw[red, very thick] (5,4) -- (5, 2) -- (3,2);
    \draw[black, very thick] (3,2) -- (1,1);
    \draw[black, very thick] (3,2) -- (6,5);
    \draw[black, very thick] (4,1) -- (6,3);
    \filldraw[black] (1,1) circle (2pt) node[anchor=south east]{$z$};
    \filldraw[black] (4,1) circle (2pt);
    \filldraw[black] (4,3) circle (2pt);
    \filldraw[black] (6,3) circle (2pt);
    \filldraw[black] (6,5) circle (2pt);
    \filldraw[black] (2,5) circle (2pt);
    \filldraw[black] (2,2) circle (2pt) node[anchor=south east]{$z$};
    \filldraw[black] (5,4) circle (2pt) node[anchor=north west]{$x_2$};
    \filldraw[black] (5,2) circle (2pt) node[anchor=north west]{$x_1$};
    \filldraw[black] (3,2) circle (2pt) node[anchor=north west]{$x_0$};
    \end{tikzpicture}
    \caption{The local equivalence class of the complex $C(2,2)$ is algebraically realizable. The $C(2,2)$ summand over the ring $\frac{\F[U,V]}{UV}$ is drawn in red.}
    \label{fig:C(2,2)}
\end{figure}
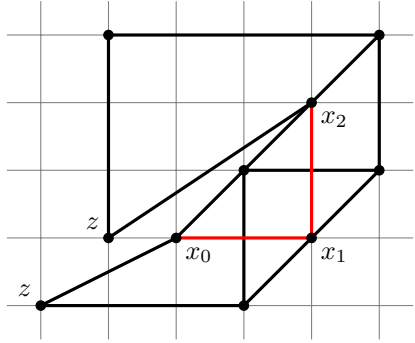
\end{example}
\begin{example}
The local equivalence class of $C(1, n)$ is not algebraically realizable for any $n \geq 1$. To see this, note that by Theorem \ref{classification}, any representative of this class is isomorphic over $\mathcal{R}_1$ to $C(1, n)\oplus A$ for some chain complex $A$. Consider a hypothetical algebraic realization and delete its diagonal arrows. This splits the complex into two components, one of which is $C(1,n)$. However, it is impossible to restore $\partial^2=0$ over $\F[U,V]$ in $C(1, n)$ by only adding some diagonal arrows, regardless of the presence of $A$. This is a contradiction, so the local equivalence class of $C(1,n)$ is not algebraically realizable.
\end{example}
We now give a more general example of an obstruction to algebraic realizability of certain standard complexes and their local equivalence classes.
\begin{example}
Let $C$ be an algebraically realizable standard complex. If the type of $C$ contains the substring $a, 1, b$ with $b > 0$, then $-b < a < 0$. This is because we must have $\partial^2=0$ over $\F[U,V]$, which necessitates that there be a dashed diagonal arrow as shown below.

$$
\begin{tikzpicture}
    \draw[step=1.0,gray,thin] (0.5,0.5) grid (2.5,3.5);
    \draw[black, very thick] (1,2) -- (1,1) -- (2,1) -- (2,3);
    \draw[black, very thick, dashed] (1,2) -- (2,3);
    \node at (1,1.5)[anchor = east]{$a$};
    \node at (2,2)[anchor = west]{$b$};
    \node at (1.5,1)[anchor = north]{$1$};
    \filldraw[black] (1,2) circle (2pt);
    \filldraw[black] (1,1) circle (2pt);
    \filldraw[black] (2,1) circle (2pt);
    \filldraw[black] (2,3) circle (2pt);
\end{tikzpicture}
$$

This example has several symmetry properties. In particular, if there is a substring $a, 1, b$ with $a > 0$, then $-a < b < 0$. There exist similar restrictions on the substrings of the form $c, -1, d$. Moreover, it is easy to see that if a substring of this form obstructs algebraic realizability of $C$, it actually obstructs algebraic realizability of an entire local equivalence class. For example, the local equivalence classes of $C(2, 1, -3, 1)$ and $C(-8, 2, 1, 2)$ are not algebraically realizable.
\end{example}

We show that phenomena of this type are the only potential obstructions to algebraic realizability of local equivalence classes of knot-like complexes. In precise language, we prove the following proposition.

\begin{proposition}\label{proposition}
The following are equivalent.
\begin{enumerate}
    \item The local equivalence class of $C(a_1, \dots, a_{2n})$ is algebraically realizable.
    \item $C(a_1, \dots, a_{2n})$ is partially realizable.
\item $C(N_1 \ | \ a_1, \dots, a_{2n} \ | -N_2)$ is partially realizable for all sufficiently large $N_1$ and $N_2$.
\end{enumerate}
\end{proposition}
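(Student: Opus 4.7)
I would prove the three conditions equivalent via the cycle $(1)\Rightarrow(2)\Rightarrow(3)\Rightarrow(1)$. For $(1)\Rightarrow(2)$, start with an algebraic realization $(D,\partial_D,B_D)$ over $\F[U,V]$ of some knot-like complex $C$ in the local equivalence class of $C(a_1,\ldots,a_{2n})$. By Theorem \ref{classification}, after a basis change over $\mathcal{R}_1$ (which lifts to $\F[U,V]$ by simply lifting basis vectors), arrange that $C\cong C(a_1,\ldots,a_{2n})\oplus A$ with a basis partition $B_D=B_1\sqcup B_2$. Reducing $D$ modulo $U^2V^2$ gives $\widetilde{D}$ over $\mathcal{R}_2$ with the same partition, and I would take the candidate partial realization to be $(\mathcal{R}_2\langle B_1\rangle,\partial')$ where $\partial'x:=\pi_1(\widetilde{\partial}x)$ is the projection onto the $B_1$-span. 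The key computation is $(\partial')^2=0$: since $\widetilde{\partial}^2=0$, the square equals $\pi_1(\widetilde{\partial}(\pi_2(\widetilde{\partial}x)))$, and the $B_2$-components of $\widetilde{\partial}x$ for $x\in B_1$ lie in $UV\cdot \mathcal{R}_2\langle B_2\rangle$ by the $\mathcal{R}_1$-splitting, so applying $\widetilde{\partial}$ once more returns to $B_1$ only through a second $UV$-factor, which vanishes since $(UV)^2=U^2V^2=0$ in $\mathcal{R}_2$.

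For $(2)\Rightarrow(3)$, I plan to take a partial realization $\widetilde{D}_0$ of $C(a_1,\ldots,a_{2n})$ and adjoin the two new basis elements $x_{-1},x_{2n+1}$ with the required endpoint arrows $V^{N_1}x_{-1}\in\widetilde{\partial}x_0$ and $U^{N_2}x_{2n+1}\in\widetilde{\partial}x_{2n}$. The naive choice $\widetilde{\partial}x_{\pm}=0$ creates defects in $\widetilde{\partial}^2$ at $x_{-1},x_{2n+1}$ whose coefficients have the form $U^aV^{b+N_1}$ or $U^{a+N_2}V^b$, dictated by the pre-existing differentials. These defects can be cancelled by adjoining correction terms in $UV\cdot\mathcal{R}_2$ to the differentials of other basis elements; for sufficiently large $N_1,N_2$, the grading constraints ensure the required correction monomials are in fact $UV$-divisible, so the corrections exist and preserve the mod-$UV$ reduction.

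For $(3)\Rightarrow(1)$, the main step, I would start from a partial realization of the extended complex and build the algebraic realization by iteratively lifting the differential from $\mathcal{R}_2$ up through $\mathcal{R}_k$ for increasing $k$ and eventually to $\F[U,V]$. At each stage, the obstruction to lifting is a cocycle in a complex of bounded grading, cancellable by correction terms of appropriate $U^aV^b$-degree, with termination following from the fact that obstructions can only appear in a bounded grading window in a finitely-generated complex. The endpoint arrows $V^{N_1}x_{-1}$ and $U^{N_2}x_{2n+1}$ serve as the first steps of the $\F[V]$- and $\F[U]$-towers required for correct homology. Where the iteration demands more room than the given basis provides, one can adjoin acyclic summands without changing the local equivalence class.

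The main obstacle I expect is $(3)\Rightarrow(1)$: both ensuring that the iterative lift terminates and producing correct homology for the resulting $\F[U,V]$-complex. Termination should follow from the bounded grading range of the obstructions in a finite complex, but carefully propagating the endpoint arrows through the lifting steps to yield a genuine $\F[V]\oplus\F[U]$-tower structure is delicate and requires detailed grading bookkeeping.
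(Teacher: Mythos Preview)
Your arguments for $(1)\Rightarrow(2)$ and $(2)\Rightarrow(3)$ are essentially the paper's. The block-matrix version of $(1)\Rightarrow(2)$---writing $\widetilde\partial=\begin{psmallmatrix}A&B\\C&D\end{psmallmatrix}$, noting $B,C$ land in $UV\cdot\mathcal{R}_2$ since the $\mathcal{R}_1$-reduction splits, hence $(\partial')^2=A^2=BC=0$ in $\mathcal{R}_2$---is clean and correct. For $(2)\Rightarrow(3)$ the paper is more explicit than your sketch: one must check that every $x$ with $\partial^2x\neq0$ after adjoining the new vertical arrow has an \emph{outgoing} (not incoming) vertical arrow, so that the correction can be placed on its neighbor, and then that these corrections create no secondary defects over $\mathcal{R}_2$. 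Your sketch gestures at this but does not isolate the point.

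The genuine gap is $(3)\Rightarrow(1)$. Your plan is to lift iteratively from $\mathcal{R}_2$ through $\mathcal{R}_k$, cancelling the obstruction $\partial^2$ at each stage by correction terms. But nothing guarantees the obstruction is cancellable within the given basis. Already for $C(2,2)$, which is trivially a chain complex over $\mathcal{R}_2$, one has $\partial^2x_2=U^2V^2x_0$ over $\mathcal{R}_3$, and a direct grading check shows there is \emph{no} arrow among $x_0,x_1,x_2$ that can kill it; the same happens for the extended versions. Your fallback ``adjoin acyclic summands'' is not a plan: you would have to specify which generators to add, how to connect them to the existing complex so as to absorb $\partial^2$, why no new obstructions are created, and why the process terminates---and your bounded-grading termination argument collapses the moment you enlarge the basis. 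The paper supplies precisely this missing construction: take a second copy $\{y_i\}$ of the partial realization $F_2$ shifted one diagonal step, add ``blue'' arrows $y_i\mapsto UVx_i$, add ``green'' arrows $x_i\mapsto U^{a-1}V^{b-1}y_j$ whenever $\langle\partial_{F_2}^2x_i,U^aV^bx_j\rangle=1$, and verify $\partial^2=0$ over $\F[U,V]$ by a color-by-color case analysis. Correct homology then comes not from the endpoint arrows themselves but from identifying $x_{-1}$ and $x_{2n+1}$ with a single far-away generator $z$, after which $x_0$ and $x_{2n}$ generate the two towers. This doubling-and-gluing idea is the heart of the implication and is invisible from the abstract obstruction-theoretic viewpoint you propose.
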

\begin{proof}
    This is the content of Lemma \ref{lemma:local class real->partially real}, Lemma \ref{lemma:partially real->extended partially real}, and Lemma \ref{lemma:extended partially real->local class real}.
\end{proof}
Let us briefly remark on the notation we use in the proofs of the following lemmas. In triples of the form $(C_i, \partial_{C_i}, B_{C_i})$, the subscript $i$ indicates that $C_i$ is an $\mathcal{R}_i$-module. To guide the reader, we note that
\begin{itemize}
    \item [] $D_\infty \rightsquigarrow D_2 \rightsquigarrow E_2$,
    \item [] $E_2 \rightsquigarrow F_2$,
    \item [] $F_2 \rightsquigarrow F_\infty \rightsquigarrow G_\infty$
\end{itemize}
are the main modules appearing in the proofs of Lemma \ref{lemma:local class real->partially real}, Lemma \ref{lemma:partially real->extended partially real}, and Lemma \ref{lemma:extended partially real->local class real} respectively.

\begin{lemma}\label{lemma:local class real->partially real}
    If the local equivalence class of $C(a_1, \dots, a_{2n})$ is algebraically realizable, then $C(a_1, \dots, a_{2n})$ is partially realizable. 
\end{lemma}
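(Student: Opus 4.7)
The plan is to realize the chain $D_\infty \rightsquigarrow D_2 \rightsquigarrow E_2$ suggested by the author. Pick a representative $C$ of the local equivalence class of $C(a_1,\dots,a_{2n})$ admitting a full realization $D_\infty$ over $\F[U,V]$, and set $D_2 := D_\infty/(U^2V^2)$, a partial realization of $C$ over $\mathcal{R}_2$. I will pass to a splitting basis of $C$ furnished by Theorem \ref{classification}, lift this change of basis to $D_2$, and then project onto the standard summand; the resulting module $E_2$ carried by the standard-summand basis will be the desired partial realization of $C(a_1,\dots,a_{2n})$.

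More precisely, Theorem \ref{classification} gives a basis $\{\widetilde{x}_0,\dots,\widetilde{x}_{2n}\}\sqcup\{y_1,\dots,y_k\}$ of $C$ witnessing the splitting $C \cong C(a_1,\dots,a_{2n}) \oplus A$ over $\mathcal{R}_1$. By homogeneity, each entry of the change-of-basis matrix $M$ from the distinguished basis of $C$ to this splitting basis is $0$ or a single monomial in $\mathcal{R}_1$, and so admits a canonical lift to a matrix $\widetilde{M}$ over $\mathcal{R}_2$. The lift is automatically invertible: if $\widetilde{N}$ is any lift of $M^{-1}$, then $\widetilde{M}\widetilde{N} = I + E$ with $E$ valued in $(UV)\subset\mathcal{R}_2$, and $(UV)^2 = (U^2V^2)=0$ in $\mathcal{R}_2$ gives $(I+E)(I+E) = I$. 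Apply $\widetilde{M}$ to lift the splitting basis to a basis $\{\widetilde{X}_0,\dots,\widetilde{X}_{2n}\}\sqcup\{Y_1,\dots,Y_k\}$ of $D_2$, and define $E_2$ to be the free $\mathcal{R}_2$-module on $\widetilde{X}_0,\dots,\widetilde{X}_{2n}$ equipped with $\partial_{E_2}\widetilde{X}_i$ given by the projection of $\partial_{D_2}\widetilde{X}_i$ onto the span of the $\widetilde{X}$'s (killing all $Y_l$-components).

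The main obstacle will be verifying $\partial_{E_2}^2 = 0$. Writing
\[
\partial_{D_2}\widetilde{X}_i = \sum_j a_{ij}\widetilde{X}_j + \sum_l b_{il}Y_l, \qquad \partial_{D_2}Y_l = \sum_k c_{lk}\widetilde{X}_k + \sum_m d_{lm}Y_m,
\]
the $\mathcal{R}_1$-splitting forces $b_{il}$ and $c_{lk}$ to reduce to $0$ modulo $UV$, hence both lie in $(UV)\subset\mathcal{R}_2$. The $\widetilde{X}_k$-coefficient of $\partial_{D_2}^2 \widetilde{X}_i = 0$ reads $\sum_j a_{ij}a_{jk} + \sum_l b_{il}c_{lk} = 0$, and the cross term vanishes because each $b_{il}c_{lk}$ lies in $(UV)^2 = (U^2V^2) = 0$. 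Therefore $\sum_j a_{ij}a_{jk} = 0$ in $\mathcal{R}_2$, i.e., $\partial_{E_2}^2 = 0$. Finally, modulo $UV$ the $b_{il}$-terms disappear, so $\partial_{E_2}$ reduces to the standard differential on $\{\widetilde{x}_0,\dots,\widetilde{x}_{2n}\}$; under the identification $\widetilde{X}_i \leftrightarrow x_i$, the complex $E_2$ is a partial realization of $C(a_1,\dots,a_{2n})$, as required.
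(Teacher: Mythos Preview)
Your proof is correct and follows essentially the same route as the paper's: realize, reduce mod $U^2V^2$, split off the standard summand using Theorem \ref{classification}, and project the differential. Your algebraic formulation of the key step---that the cross terms $b_{il}c_{lk}$ lie in $(UV)^2 = 0$ in $\mathcal{R}_2$---is exactly the content of the paper's geometric argument that a two-step path $x_i \to y \to x_j$ with $y \in B_A$ must use two diagonal arrows; you have simply packaged it more cleanly. You are also more careful than the paper about justifying why the splitting basis of the $\mathcal{R}_1$-reduction lifts to a basis over $\mathcal{R}_2$ (via the $(I+E)^2 = I$ trick), a point the paper passes over in silence.
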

\begin{proof}
Let $(D_\infty, \partial_{D_\infty}, B_{D_\infty})$ be an algebraic realization of the local equivalence class of $C(a_1, \dots, a_{2n})$. By Theorem \ref{classification}, after an appropriate change of basis, the mod $UV$ reduction of $D_\infty$ splits as a direct sum $C(a_1, \dots, a_{2n}) \oplus A$ for some free chain complex over $\mathcal{R}_1$ with a distinguished basis $(A, \partial_{A}, B_{A})$. In other words, $(D_\infty, \partial_{D_\infty}, \{x_0, \dots, x_{2n}\}\cup B_{A})$ is a full realization of $C(a_1, \dots, a_{2n})\oplus A$.

Let $(D_2, \partial_{D_2}, \{x_0, \dots, x_{2n}\}\cup B_{A})$ be the mod $U^2V^2$ reduction of $D_\infty$. Note that $D_2$ is a chain complex over $\mathcal{R}_2$ since $D_\infty$ is a chain complex over $\mathcal{R}_\infty$. Let now $E_2=\mathcal{R}_2\langle x_0, \dots, x_{2n}\rangle$ be the submodule of $D_2$ generated by $\{x_0, \dots, x_{2n}\}$ and let $\partial_{E_2}: E_2 \to E_2$ be defined via $\langle \partial_{E_2}x_i, U^aV^b x_j \rangle = \langle \partial_{D_2}x_i, U^aV^b x_j \rangle$ for all $x_i, x_j$ and extended to an $\mathcal{R}_2$-module homomorphism. In other words, $\partial_{E_2}$ is the part of the differential $\partial_{D_2}$ that is `happening in $E_2$'.

We claim that $(E_2, \partial_{E_2}, \{x_0, \dots, x_{2n}\})$ is a partial realization of $C(a_1, \dots, a_{2n})$. Assume for the contradiction that this is not the case. Since $C(a_1, \dots, a_{2n})$ is a mod $UV$ reduction of $E_2$, it must be the case that $(E_2, \partial_{E_2})$ is not a chain complex. So there exist $x_i, x_j$ and $a, b \in \N$ with $\min (a,b) \leq 1$ such that $\langle \partial^2_{E_2}x_i, U^aV^bx_j \rangle =1$. But $\partial^2_{D_2}x_i =0$ so there is a composition of two arrows in $D_2$, one from $x_i$ to $y$, followed by one from $y$ to $x_j$ for some $y \in B_A$. But $\min (a,b) \leq 1$ so both arrows cannot be diagonal. This means that one of the arrows is vertical or horizontal, which implies that $y \in \{x_0, \dots, x_{2n}\}$. This is a contradiction so $E_2$ is a chain complex and a partial realization of $C(a_1, \dots, a_{2n})$.
\end{proof}

\begin{lemma}\label{lemma:partially real->extended partially real}
If $C(a_1, \dots, a_{2n})$ is partially realizable, then $C(N_1 \ | \ a_1, \dots, a_{2n} \ | -N_2)$ is partially realizable for all sufficiently large $N_1, N_2 \in\N$.
\end{lemma}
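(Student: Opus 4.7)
My plan is to build a partial realization of $C(N_1\mid a_1,\dots,a_{2n}\mid -N_2)$ by extending the given partial realization $(E_2,\partial_{E_2})$ of $C(a_1,\dots,a_{2n})$ with two new basis elements $x_{-1}$ and $x_{2n+1}$. Set $F_2 := E_2 \oplus \mathcal{R}_2\langle x_{-1}, x_{2n+1}\rangle$, with the bigradings of the new generators forced by the requirement that the arrows $V^{N_1}x_{-1}$ and $U^{N_2}x_{2n+1}$ have bidegree $(-1,-1)$. As an initial candidate differential $\partial_0$, I keep $\partial_{E_2}$ on the old generators while adjoining $V^{N_1}x_{-1}$ to $\partial x_0$ and $U^{N_2}x_{2n+1}$ to $\partial x_{2n}$, and set $\partial_0 x_{-1}=\partial_0 x_{2n+1}=0$. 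The mod-$UV$ reduction of $(F_2,\partial_0)$ is then the desired extended standard complex.

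Next I compute $\partial_0^2$. Using $\partial_{E_2}^2=0$, the only new contributions come from composing an arrow $\partial_{E_2}x_i\ni c\,U^aV^b x_0$ with $V^{N_1}x_{-1}$, yielding a term $c\,U^aV^{b+N_1}x_{-1}$, and symmetrically via $x_{2n}$. A parity-of-grading argument---both $\gr(x_0)$ and $\gr(x_{2n})$ are even in each coordinate, while any arrow between them would force an odd shift---shows that no arrow connects $x_0$ and $x_{2n}$, so the contributions from $i=0$ and $i=2n$ vanish automatically. The term $U^aV^{b+N_1}x_{-1}$ vanishes in $\mathcal{R}_2$ once $a\ge 2$ (assuming $N_1\ge 2$), so the only dangerous compositions go through arrows $x_i\to U^aV^b x_0$ with $a\le 1$, and symmetrically for $x_{2n+1}$.

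To cancel the residual obstructions I will perturb $\partial_0$ by adding correction terms of the form $\alpha_j\,U^{a_j''}V^{b_j''}x_{-1}$ and $\beta_j\,U^{\tilde a_j}V^{\tilde b_j}x_{2n+1}$ to $\partial x_j$ for appropriate even indices $j$, with exponents uniquely forced by the grading. For $N_1,N_2$ sufficiently large, these are the only grading-compatible corrections available and $\partial x_{-1}=\partial x_{2n+1}=0$ remains forced; additionally the monomials must be divisible by $UV$ so as not to disturb the mod-$UV$ reduction, restricting the relevant $j$'s by $\gr_U(x_j)\le -2$ (respectively $\gr_V(x_j)\le -2$). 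The crucial grading identity $a_{ij}+a_j''=a$ and $b_{ij}+b_j''=b+N_1$, independent of $j$, collapses the cancellation requirement at each bad odd $i$ to a single $\F$-linear equation $\sum_j\alpha_j c_{ij}=c_{i,0}$, with a decoupled symmetric system in the $\beta_j$'s at the other end.

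The main obstacle is to show that these two $\F$-linear systems are solvable, and this is precisely where the partial realizability hypothesis enters. The identity $\partial_{E_2}^2=0$ in $\mathcal{R}_2$ constrains the diagonal arrows of $E_2$ in exactly the way needed to supply, for every problematic direct composition $x_i\to U^aV^bx_0$, an alternative two-step composition $x_i\to U^{a_{ij}}V^{b_{ij}}x_j\to U^aV^{b+N_1}x_{-1}$ through some even $j$ of the required grading. I extract the needed relations by expanding $\partial_{E_2}^2x_k=0$ at appropriate $k$ and reading off the coefficient of the relevant monomial. Once the $\alpha_j,\beta_j\in\F$ are chosen, a direct check verifies $(\partial_0+d_{\mathrm{corr}})^2=0$ on each basis element: trivially on $x_{-1}$ and $x_{2n+1}$, by $\partial_{E_2}^2=0$ and the parity argument on $x_0$, $x_{2n}$ and the even middle generators, and by the cancellation design on the odd middle generators.
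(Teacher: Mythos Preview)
Your overall architecture matches the paper's: extend $E_2$ by $x_{-1}$ and $x_{2n+1}$, add the two long non-diagonal arrows, isolate the resulting $\partial_0^2$-obstructions at those odd $x_i$ carrying an arrow $x_i\to UV^{b}x_0$ (resp.\ $x_i\to U^{b'}V x_{2n}$), and cancel by adding diagonal correction arrows from suitable even $x_j$ into $x_{-1}$ (resp.\ $x_{2n+1}$). Your parity claim is correct (indeed $\gr(x_{2n})-\gr(x_0)$ is even in each coordinate), and the reduction to a system $\sum_j \alpha_j c_{ij}=c_{i,0}$ is sound.

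The gap is precisely at the step you flag as ``the main obstacle.'' Because any correction arrow $x_j\to U^{a_j''}V^{b_j''}x_{-1}$ has $a_j''\ge 1$, while the obstruction sits at total $U$-power $1$, the only $c_{ij}$ that can contribute are those with $a_{ij}=0$, i.e.\ \emph{vertical} arrows $x_i\to V^{\bullet}x_j$ in $E_2$. In a standard complex augmented only by diagonal arrows, the unique such arrow at odd $x_i$ is the one to $x_{i+1}$, and it exists only when $a_{i+1}<0$. So your system is row-by-row: the equation at $i$ reads $\alpha_{i+1}=c_{i,0}$ if $a_{i+1}<0$, and $0=c_{i,0}$ if $a_{i+1}>0$. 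Solvability is therefore exactly the statement that every bad $x_i$ has its vertical arrow \emph{outgoing}. Your sentence ``I extract the needed relations by expanding $\partial_{E_2}^2x_k=0$ at appropriate $k$'' does not prove this; you must name $k=i+1$ and argue, as the paper does, by contradiction: if the vertical arrow were incoming ($x_{i+1}\to V^{a_{i+1}}x_i$), the composite $x_{i+1}\to x_i\to UV^{b}x_0$ would contribute a nonzero $U^1$-term to $\partial_{E_2}^2x_{i+1}$ with no possible companion, since any other two-step path from $x_{i+1}$ to $x_0$ of total $U$-power $1$ must end in a vertical arrow into $x_0$, which does not exist. That single combinatorial fact is the content of the lemma; once it is in hand, your system is diagonal and solved by $\alpha_{i+1}=c_{i,0}$ (with no consistency issues, as distinct $i$ determine distinct $j=i+1$), and your final verification goes through for $N_1,N_2$ large enough that every $b_j''\ge 2$.
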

\begin{proof}
We will show that taking any $N_1 \geq \max \{ |a_1|, \dots, |a_{2n}| \}+1$ works. Assume $C(a_1, \dots, a_{2n})$ is partially realizable and let $(E_2, \partial_{E_2}, B_{E_2})$ be its partial realization. Consider the free $\mathcal{R}_2$-module $F_2$ with a distinguished basis $B_{F_2}=B \cup \{ x_{-1} \}$ and whose endomorphism is $\partial_{E_2}$ together with a vertical arrow of length $N_1$ from $x_0$ to $x_{-1}$. Consider the set $X = \{ x\in B_{F_2} \ |  \ \partial_{F_2}^2 x \neq 0 \}$ of elements witnessing that $F_2$ is not a chain complex. Note that since $E_2$ is a chain complex, all elements of $X$ have to come from the extra vertical arrow we just added. In other words, there is an alternative description of $X$ as $\{ x\in B_{F_2} \ |  \ \partial_{F_2} x = UV^bx_0 \text{ for some }b\geq 0\}$ using that $N_1 \geq 2$.

Since the mod $UV$ reduction of $E_2$ is the standard complex $C(a_1, \dots, a_{2n})$, every $x \in B_{F_2}$ is adjacent to exactly one vertical arrow in $F_2$. In particular, this is true for every $x \in X$. To prove that all vertical arrows adjacent to elements of $X$ are outgoing, assume for the contradiction that there is an $x\in X$ with an incoming vertical arrow from $y$ to $x$. Then passing through $x$ is the only way of going from $y$ to $x_0$ in $E_2$ in two steps. Any other way would have to first go diagonally (because $y$ is adjacent to a unique vertical arrow that has already been used) and then vertically to $x_0$, but $x_0$ has no incoming vertical arrow. Therefore $\langle\partial_{E_2}^2y, UV^bx_0 \rangle = 1$ for some $b$. This is a contradiction with the fact that $E_2$ is a chain complex. Therefore, we have established that all $x \in X$ have outgoing vertical arrows. Let $Y$ be the set of their other endpoints. By our choice of $N_1$, we can draw diagonal arrows in $F_2$ from every element $y\in Y$ to $x_{-1}$. Therefore $\partial_{F_2}^2 x = 0$ in $F_2$ for all $x \in X$.

We claim that $F_2$ is a free chain complex over $\mathcal{R}_2$ with a distinguished basis $B_{F_2}$. Indeed, the only thing that could go wrong is that $\partial_{F_2}^2 z \neq 0$ for some $z \notin X$ that has an arrow to an element of $Y$. But such arrows cannot be vertical (because the unique vertical arrow at every element of $Y$ goes to an element of $X$). Any non-vertical arrow from $z$ to $y \in Y$ followed by a diagonal arrow from $y$ to $x_{-1}$ vanishes over $\mathcal{R}_2$, so indeed $\partial_{F_2}^2=0$ as required. A completely analogous construction works at the other endpoint to construct a partial realization of $C(N_1 \ | \ a_1, \dots, a_{2n} \ | -N_2)$.
\end{proof}
\begin{lemma}\label{lemma:extended partially real->local class real}
If $C(N_1 \ | \ a_1, \dots, a_{2n} \ | -N_2)$ is partially realizable for all sufficiently large $N_1, N_2 \in\N$, then the local equivalence class of $C(a_1, \dots, a_{2n})$ is algebraically realizable.
\end{lemma}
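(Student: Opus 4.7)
The plan is to construct $G_\infty$, a full realization over $\F[U,V]$ of some knot-like complex locally equivalent to $C(a_1, \ldots, a_{2n})$ with correct homology, starting from the given partial realization $F_2$ of the extended complex $C(N_1 \ | \ a_1, \ldots, a_{2n} \ | -N_2)$. I would proceed in two main steps: first lift $F_2$ to a chain complex $F_\infty$ over $\F[U,V]$ on the same distinguished basis $B_{F_2}$, then quotient $F_\infty$ by the sub-chain-complexes generated by the endpoint generators $x_{-1}$ and $x_{2n+1}$ to obtain $G_\infty$.

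For the lift $F_2 \rightsquigarrow F_\infty$, treat $\partial_{F_2}$ as an endomorphism of the free $\F[U,V]$-module on $B_{F_2}$. Since $F_2$ is a chain complex over $\mathcal{R}_2$, the obstruction $\partial^2$ over $\F[U,V]$ lies in $(U^2V^2)\F[U,V] \cdot \End$. I would cancel this obstruction iteratively by adding higher-order diagonal correction terms. The size of the tails $N_1, N_2$ is crucial: corrections can be routed through paths involving the endpoints $x_{-1}, x_{2n+1}$. By grading considerations (for sufficiently large $N_1, N_2$, no basis element has a bigrading compatible with being the target of $\partial x_{-1}$ or $\partial x_{2n+1}$), we have $\partial x_{-1} = \partial x_{2n+1} = 0$ in $F_\infty$, so the $\F[U,V]$-submodules $M = \F[U,V]\langle x_{-1}\rangle$ and $N = \F[U,V]\langle x_{2n+1}\rangle$ are sub-chain-complexes.

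Define $G_\infty := F_\infty / (M \oplus N)$, a chain complex over $\F[U,V]$ with distinguished basis $\{x_0, \ldots, x_{2n}\}$. Its mod $UV$ reduction equals $C(a_1, \ldots, a_{2n})$, since quotienting removes the extended arrows $V^{N_1} x_{-1}$ and $U^{N_2} x_{2n+1}$, while diagonal corrections from both $F_2$ and the lift vanish mod $UV$. To verify correct homology, observe that in $G_\infty/U$ the quotient kills the $V^{N_1} x_{-1}$ term of $\partial x_0$, so $x_0$ becomes a cycle; since no other basis element has a pure-$V$ contribution to $x_0$ (all incoming arrows to $x_0$ in the extended complex are $U$-arrows, killed by $U = 0$), $x_0$ is not a boundary and generates the free $\F[V]$-tower, with $\gr_U(x_0) = 0$ by the standard complex convention. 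A symmetric analysis with $V=0$ gives $x_{2n}$ as the free $\F[U]$-tower generator satisfying $\gr_V(x_{2n}) = 0$. The main obstacle is the lifting step, where one must verify that the obstructions in $(U^2V^2)\F[U,V] \cdot \End$ can be systematically canceled using the extended tails; this is where the hypothesis of arbitrarily large $N_1, N_2$ is essential.
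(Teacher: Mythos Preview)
Your lifting step has a genuine gap: it is \emph{not} in general possible to lift the partial realization $F_2$ to a chain complex $F_\infty$ over $\F[U,V]$ on the \emph{same} basis $B_{F_2}=\{x_{-1},\dots,x_{2n+1}\}$, no matter how large $N_1,N_2$ are. Consider $C(2,2)$. The extended complex $C(N_1\,|\,2,2\,|\,-N_2)$ already satisfies $\partial^2=0$ over $\mathcal{R}_2$ with no diagonal arrows added, so $F_2$ is just the extended standard complex. Over $\F[U,V]$ one has $\partial^2 x_2 = U^2V^2 x_0$, and the only generator of the correct grading parity through which a second two-step path $x_2\to ?\to x_0$ could pass is $x_1$; but the unique arrow $x_1\to U^2 x_0$ forces the first step to be the existing $x_2\to V^2 x_1$. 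A direct arrow $x_2\to x_0$ is ruled out by grading parity, and routing through $x_{-1}$ or $x_3$ fails because (as you yourself observe) $\partial x_{-1}=\partial x_3=0$ and the reverse arrows are forbidden by grading for large $N_1,N_2$. So no choice of diagonal corrections on $B_{F_2}$ makes $\partial^2=0$. Your subsequent quotient would then produce a full realization of $C(2,2)$ itself on the basis $\{x_0,x_1,x_2\}$, which does not exist.

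The paper's proof avoids this obstruction by \emph{doubling} the basis: $F_\infty$ is built on $\{x_{-1},\dots,x_{2n+1}\}\cup\{y_{-1},\dots,y_{2n+1}\}$, where the $y_i$ form a shifted copy of $F_2$ sitting one diagonal step above. Each $y_j$ carries a length-one diagonal arrow $y_j\to UVx_j$, and whenever $\langle\partial_{F_2}^2 x_i, U^aV^b x_j\rangle=1$ one adds a ``green'' arrow $x_i\to U^{a-1}V^{b-1}y_j$; the composite green--blue then cancels the offending $\partial^2$ term. The tails $x_{\pm}$ are used only afterwards, to glue the two endpoints into a single generator $z$ and thereby obtain the correct homology. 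Your intuition that the large tails absorb obstructions is misplaced: they matter for the homology step, not for the lift.
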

\begin{proof}
    Our proof proceeds through an intermediate step. We prove that there exists a chain complex $A$ over $\mathcal{R}_1$ such that $C(N_1 \ | \ a_1, \dots, a_{2n} \ | -N_2) \oplus A$ is fully realizable. Then we adapt this construction to produce a suitable chain complex over $\F[U,V]$ algebraically realizing the local equivalence class of $C(a_1, \dots, a_{2n})$.

    \vspace{1em}

    Let $(F_2, \partial_{F_2}, B_{F_2})$ be a partial realization of $C(N_1 \ | \ a_1, \dots, a_{2n} \ | -N_2)$ with a basis $B_{F_2} = \{x_{-1}, x_0, \dots, x_{2n}, x_{2n+1} \}$. Define $B_{F_\infty} = \{ x_{-1}, \dots, x_{2n+1}, y_{-1}, \dots, y_{2n+1}\}$ and let $F_\infty$ be the free $\mathcal{R}_\infty$-module generated by $B_{F_\infty}$. We equip $F_\infty$ with an endomorphism $\partial_{F_\infty}$ and a $\Z \oplus \Z$ grading as in the examples in Figure \ref{fig:Case C(-,...,+)} and Figure \ref{fig:Case C(-,...,+) asymmetric}.
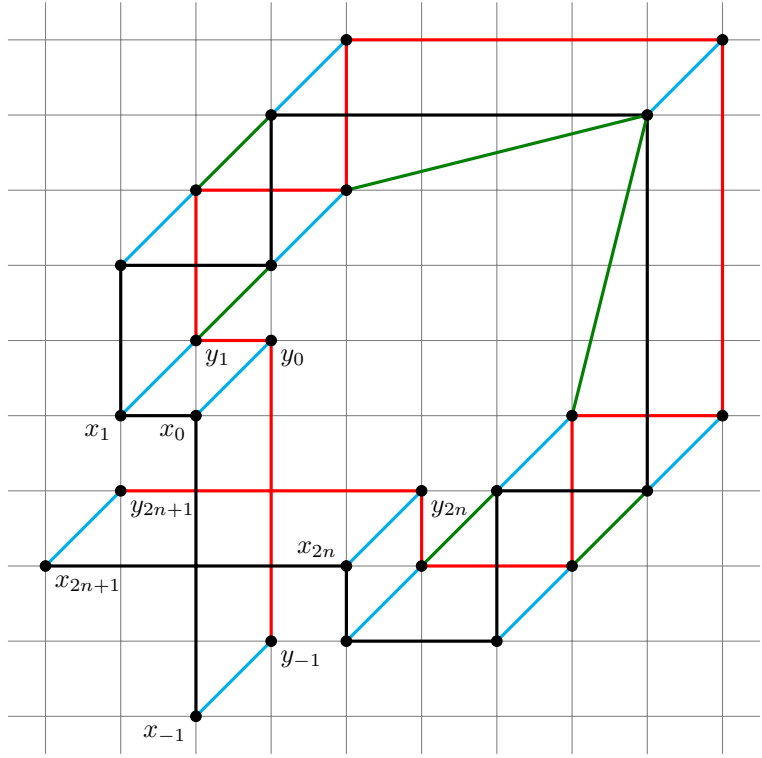
\begin{figure}[t]
    \centering
    \begin{tikzpicture}[scale=1.0]
    \draw[step=1.0,gray,thin] (-0.5,-0.5) grid (9.5,9.5);
    \draw[red, very thick] (1,3) -- (5, 3) -- (5, 2) -- (7, 2) -- (7, 4) -- (9, 4) -- (9,9) -- (4,9) -- (4,7) -- (2,7) -- (2,5) -- (3,5) -- (3,1);
    \draw[black, very thick] (0,2) -- (4, 2) -- (4, 1) -- (6, 1) -- (6, 3) -- (8, 3) -- (8,8) -- (3,8) -- (3,6) -- (1,6) -- (1,4) -- (2,4) -- (2,0);

    \draw[cyan, very thick] (2,0) -- (3,1);
    \draw[cyan, very thick] (0,2) -- (1,3);
    
    \draw[cyan, very thick] (6,1) -- (7,2);
    \draw[dark green, very thick] (7,2) -- (8,3);
    \draw[cyan, very thick] (8,3) -- (9,4);

    \draw[cyan, very thick] (1,6) -- (2,7);
    \draw[dark green, very thick] (2,7) -- (3,8);
    \draw[cyan, very thick] (3,8) -- (4,9);
    
    \draw[cyan, very thick] (4,1) -- (5,2);
    \draw[dark green, very thick] (5,2) -- (6,3);
    \draw[cyan, very thick] (6,3) -- (7,4);
    \draw[dark green, very thick] (7,4) -- (8,8);

    \draw[cyan, very thick] (1,4) -- (2,5);
    \draw[dark green, very thick] (2,5) -- (3,6);
    \draw[cyan, very thick] (3,6) -- (4,7);
    \draw[dark green, very thick] (4,7) -- (8,8);
    
    \draw[cyan, very thick] (4,2) -- (5,3);
    \draw[cyan, very thick] (2,4) -- (3,5);
    \draw[cyan, very thick] (8,8) -- (9,9);
    
    \filldraw[black] (6,3) circle (2pt);
    \filldraw[black] (8,3) circle (2pt);
    \filldraw[black] (8,8) circle (2pt);
    \filldraw[black] (3,8) circle (2pt);
    \filldraw[black] (3,6) circle (2pt);
    \filldraw[black] (1,6) circle (2pt);
    \filldraw[black] (1,4) circle (2pt) node[anchor=north east]{$x_1$};
    \filldraw[black] (2,4) circle (2pt) node[anchor=north east]{$x_0$};

    \filldraw[black] (5,3) circle (2pt) node[anchor=north west]{$y_{2n}$};
    \filldraw[black] (5,2) circle (2pt) node[anchor=north west]{};
    \filldraw[black] (7,2) circle (2pt);
    \filldraw[black] (7,4) circle (2pt); 
    \filldraw[black] (9,4) circle (2pt);
    \filldraw[black] (9,9) circle (2pt);
    \filldraw[black] (4,9) circle (2pt);
    \filldraw[black] (4,7) circle (2pt);
    \filldraw[black] (2,7) circle (2pt);
    \filldraw[black] (2,5) circle (2pt) node[anchor=north west]{$y_1$};
    \filldraw[black] (3,5) circle (2pt) node[anchor=north west]{$y_0$};
    \filldraw[black] (4,2) circle (2pt) node[anchor=south east]{$x_{2n}$};
    \filldraw[black] (4,1) circle (2pt) node[anchor=north west]{};
    \filldraw[black] (6,1) circle (2pt);

    \filldraw[black] (2,0) circle (2pt) node[anchor=north east]{$x_{-1}$};
    \filldraw[black] (0,2) circle (2pt) node[anchor=north west]{$x_{2n+1}$};
    \filldraw[black] (1,3) circle (2pt) node[anchor=north west]{$y_{2n+1}$};
    \filldraw[black] (3,1) circle (2pt) node[anchor=north west]{$y_{-1}$};
    \end{tikzpicture}
    \caption{A partial realization $F_2$ of the starting complex $C(N_1 \ | \ a_0, \dots, a_{2n}, \ | -N_2)$ is drawn in black. The depicted structure $F_\infty$ is a chain complex over $\F[U,V]$.}
    \label{fig:Case C(-,...,+)}
\end{figure}

Let us describe the construction in words: $F_\infty$ contains two copies of $F_2$, one generated by the $x_i$ (drawn in black) and one generated by the $y_i$ (drawn in red). The red copy of $F_2$ is on the same diagonal one step above the black copy of $F_2$. We draw the diagonal arrows from $y_i$ to $x_i$ for all $i$ so as to make $\langle \partial_{F_\infty} y_i, UVx_i \rangle = 1$. These are the light blue arrows. Finally, if $a, b \in \N$ are such that $\langle \partial_{F_2}^2 x_{i}, U^{a}V^{b}x_j \rangle = 1$ (note that this implies $a, b \geq 2$ since $F_2$ is a chain complex over $\mathcal{R}_2$), we draw a diagonal arrow from $x_{i}$ to $y_{j}$. These are the green arrows and they make $\langle \partial_{F_\infty} x_{i}, U^{a-1}V^{b-1}y_j \rangle = 1$.

\vspace{1em}

The $\Z \oplus \Z$ gradings of the basis elements $x_i$ of $F_\infty$ are the same as their gradings in $F_2$. The gradings of the basis elements $y_i$ are uniquely determined by the requirement that $\partial$ have degree $(-1, -1)$. Explicitly, we have $\gr(y_i)=\gr(x_i)-(1, 1)$.

\vspace{1em}


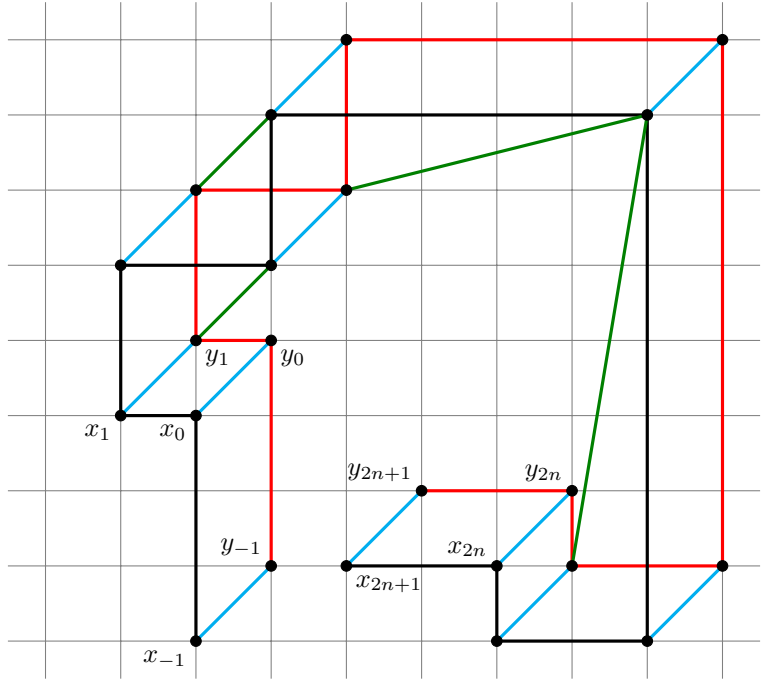
\begin{figure}[t]
    \centering
    \begin{tikzpicture}[scale=1.0]
    \draw[step=1.0,gray,thin] (-0.5,1.5) grid (9.5,10.5);
    \draw[red, very thick] (5,4) -- (7,4) -- (7,3) -- (9, 3) -- (9,10) -- (4,10) -- (4,8) -- (2,8) -- (2,6) -- (3,6) -- (3,3);
    \draw[black, very thick] (4, 3) -- (6, 3) -- (6, 2) -- (8, 2) -- (8,9) -- (3,9) -- (3,7) -- (1,7) -- (1,5) -- (2,5) -- (2,2);

    \draw[cyan, very thick] (4,3) -- (5,4);
    \draw[cyan, very thick] (2,2) -- (3,3);
    
    \draw[cyan, very thick] (6,2) -- (7,3);
    \draw[cyan, very thick] (8,2) -- (9,3);
    \draw[cyan, very thick] (6,3) -- (7,4);

    \draw[cyan, very thick] (1,7) -- (2,8);
    \draw[dark green, very thick] (2,8) -- (3,9);
    \draw[cyan, very thick] (3,9) -- (4,10);
    
    \draw[dark green, very thick] (7,3) -- (8,9);

    \draw[cyan, very thick] (1,5) -- (2,6);
    \draw[dark green, very thick] (2,6) -- (3,7);
    \draw[cyan, very thick] (3,7) -- (4,8);
    \draw[dark green, very thick] (4,8) -- (8,9);
    
    \draw[cyan, very thick] (2,5) -- (3,6);
    \draw[cyan, very thick] (8,9) -- (9,10);

    \filldraw[black] (8,9) circle (2pt);
    \filldraw[black] (3,9) circle (2pt);
    \filldraw[black] (3,7) circle (2pt);
    \filldraw[black] (1,7) circle (2pt);
    \filldraw[black] (1,5) circle (2pt) node[anchor=north east]{$x_1$};
    \filldraw[black] (2,5) circle (2pt) node[anchor=north east]{$x_0$};

    \filldraw[black] (7,4) circle (2pt) node[anchor=south east]{$y_{2n}$};
    \filldraw[black] (7,3) circle (2pt) node[anchor=north west]{};
    \filldraw[black] (8,2) circle (2pt); 
    \filldraw[black] (9,3) circle (2pt);
    \filldraw[black] (9,10) circle (2pt);
    \filldraw[black] (4,10) circle (2pt);
    \filldraw[black] (4,8) circle (2pt);
    \filldraw[black] (2,8) circle (2pt);
    \filldraw[black] (2,6) circle (2pt) node[anchor=north west]{$y_1$};
    \filldraw[black] (3,6) circle (2pt) node[anchor=north west]{$y_0$};
    \filldraw[black] (6,3) circle (2pt) node[anchor=south east]{$x_{2n}$};
    \filldraw[black] (6,2) circle (2pt) node[anchor=north west]{};

    \filldraw[black] (4,3) circle (2pt) node[anchor=north west]{$x_{2n+1}$};
    \filldraw[black] (2,2) circle (2pt) node[anchor=north east]{$x_{-1}$};
    \filldraw[black] (5,4) circle (2pt) node[anchor=south east]{$y_{2n+1}$};
    \filldraw[black] (3,3) circle (2pt) node[anchor=south east]{$y_{-1}$};
    \end{tikzpicture}
    \caption{A partial realization $F_2$ of the starting complex is drawn in black. The depicted structure $F_\infty$ is a chain complex over $\F[U,V]$.}
    \label{fig:Case C(-,...,+) asymmetric}
\end{figure}

We shall now check that $F_2$ is in fact a chain complex, \emph{i.e.}, that $\partial_{F_\infty}^2=0$. Since we are working over $\F=\Z/2$, it is enough to verify that, starting at any distinguished basis element, there is an even number of ways to reach any other generator in two steps. It is useful to have the following relationships in mind:
\begin{itemize}[label=--]
    \item black $\circ$ black $=$ black $\circ$ black \hfill (some black pairs)
    \item \textcolor{cyan}{blue} $\circ$ \textcolor{dark green}{green} $= \ $ \textcolor{black}{black} $\circ$ \textcolor{black}{black} \hfill (the remaining black pairs)
    \item \textcolor{dark green}{green} $\circ$ \textcolor{black}{black} $= \ $\textcolor{red}{red} $\circ$ \textcolor{dark green}{green}
    \item \textcolor{red}{red} $\circ$ \textcolor{red}{red} $=$ \textcolor{red}{red} $\circ$ \textcolor{red}{red} \hfill (some red pairs)
    \item  \textcolor{dark green}{green} $\circ$ \textcolor{cyan}{blue} $=$ \textcolor{red}{red} $\circ$ \textcolor{red}{red} \hfill (the remaining red pairs)
     \item \textcolor{cyan}{blue} $\circ$ \textcolor{red}{red} $= \ $\textcolor{black}{black} $\circ$ \textcolor{cyan}{blue}.
\end{itemize}
The exact meaning of these equations is now formalized in the careful and detailed write-up of the proof that $\partial_{F_\infty}^2=0$.

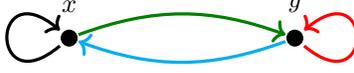
\begin{figure}[t]
    \centering
    \begin{tikzpicture}[scale=1.5]
        \def\a{0.15}
        \draw (-1,\a) node[anchor = south]{$x$};
        \draw (1,\a) node[anchor = south]{$y$};

        \node (X) at (-1,0){};
        \node (Y) at (1,0){};
        
        \filldraw[black] (X) circle (2pt);
        \filldraw[black] (Y) circle (2pt);
        \draw[->, dark green, very thick] (X) to[out=20, in=160] (Y);
        \draw[->, cyan, very thick] (Y) to[out=-160, in=-20] (X);
        \draw[->, black, very thick] (X) to[out=-140, in=140, looseness = 15] (X);
        \draw[->, red, very thick] (Y) to[out=-40, in=40, looseness = 15] (Y);
    \end{tikzpicture}
    \caption{A schematic depiction of sources and targets of arrows of each color.}
    \label{fig:Arrow analysis}
\end{figure}

\begin{itemize}
    \item Let's pair all ways of travelling from $x_i$ to $U^aV^bx_j$ in two steps. 
    \begin{itemize}
        \item If $\min (a, b) \leq 1$, then $\langle \partial_{F_2}^2 x_i, U^aV^bx_j \rangle = 0$ since $\partial_{F_2}$ is a differential over $\mathcal{R}_2$. There are no extra ways to travel between $x_i$ and $x_j$ in $F_\infty$ since there is an outgoing green arrow from $x_i$ only if $\langle \partial_{F_2}^2 x_i, U^aV^bx_j \rangle = 1$.
        \item Otherwise $a, b \geq 2$. Assume $\langle \partial_{F_2}^2 x_i, U^aV^bx_j \rangle = 1$. By construction, there is a green arrow in $F_\infty$ from $x_i$ to $y_j$, which, together with the blue arrow from $y_{j}$ to $x_{j}$ cancels the two black arrows. The converse is true as well. The existence of a green arrow starting at $x_i$ implies $\langle \partial_{F_2}^2 x_i, U^aV^bx_j \rangle = 1$, \emph{i.e.} the existence of two consecutive black arrows with a non-zero composition.
    \end{itemize}
    \item Let's pair all ways of travelling from $x_i$ to $U^aV^by_j$ in two steps. Let there be a black arrow from $x_i$ to $x_{k}$ followed by a green arrow to $y_j$. The green arrow from $x_{k}$ to $y_{j}$ implies that $\langle \partial_{F_2}^2 x_k, U^{a'}V^{b'}x_j \rangle = 1$ for some $a'$, $b'$. In other words, there is a sequence of two black arrows, one from $x_k$ to $x_l$, followed by one from $x_l$ to $x_j$. Therefore, there is a green arrow from $x_i$ to $y_l$. Together with the red arrow from $y_l$ to $y_j$, this cancels the original two black arrows.
        
    The converse is true as well. Any green arrow followed by a red arrow is matched with a sequence of two black arrows by a similar reasoning.

    \item Let's pair all ways of travelling from $y_i$ to $U^aV^by_j$ in two steps.
    \begin{itemize}
        \item If $\min (a, b) \leq 1$, then $\langle \partial_{F_2}^2 x_i, U^aV^bx_j \rangle = 0$ by assumption and since the $y_i$'s are just a copy of $F_2$, this pairs some red arrows together.
        \item Otherwise $a, b \geq 2$. Assume $\langle \partial_{F_2}^2 x_i, U^aV^bx_j \rangle = 1$. By construction, there is a green arrow in $F_\infty$ from $x_i$ to $y_j$, which, together with the blue arrow from $y_{i}$ to $x_{i}$ cancels the two red arrows. The converse is true as well. The existence of a green arrow starting at $x_i$ implies $\langle \partial_{F_2}^2 x_i, U^aV^bx_j \rangle = 1$, \emph{i.e.} we get the existence of two consecutive black arrows from $x_i$ to $x_j$ with a non-zero composition. Therefore also the existence of two consecutive red arrows from $y_i$ to $y_j$ with a non-zero composition.
    \end{itemize}
    \item Finally, let's now pair all ways of travelling from $y_i$ to $U^aV^bx_j$ in two steps. There is just one pair of ways how this can be achieved. The composition of a red arrow from $y_i$ to $y_j$ with the blue arrow from $y_j$ to $x_j$ is cancelled by the composition of a blue arrow from $y_i$ to $x_i$ with the black arrow from $x_i$ to $x_j$.
\end{itemize}
We have now verified that $F_\infty$ is a $\Z\oplus\Z$ graded free chain complex over $\mathcal{R}_\infty$. Its mod $UV$ reduction is isomorphic to a direct sum of two copies of $C(N, a_1, \dots, a_{2n}, -N)$, one black and one red. Therefore $F_\infty$ is not an algebraic relalization, since its homology is not correct yet. We achieve this by modifying the construction as follows.

\vspace{1em}

It was proven in Lemma \ref{lemma:partially real->extended partially real} that the exact lengths $N_1, N_2$ of the extensions are not important. In particular, the extensions can always be elongated. This is important for the following construction.

Let $s=\frac{1}{2}\sum_{i=1}^{2n}\mathrm{sgn}(a_i) \in \Z$. Consider the free $\mathcal{R}_\infty$-module $G_\infty$ generated by $\{x_0, \dots, x_{2n}, y_{-1}, \dots, y_{2n+1}, z\}$. The elements $x_i$ and $y_i$ for $i\in\{0, \dots, 2n\}$ are drawn in the plane as earlier and the basis element $z$ is drawn in the plane far down and to the left of any other generator. If $s=0$, the basis element $y_{-1}$ is drawn in the row of $z$ and the column of $y_0$ and the basis element $y_{2n-1}$ is drawn in the row of $y_{2n}$ and the column of $z$. If $s\neq 0$, we draw the second copy of $z$ $|s|$ places to the left and below of the first copy. If $s > 0$, the basis element $y_{-1}$ is drawn in the row of the second copy of $z$ and the column of $y_{-1}$ and the basis element $y_{2n+1}$ is drawn in the row of $y_{2n}$ and the column of the first copy of $z$. If $s < 0$, the basis element $y_{-1}$ is drawn in the row of the first copy of $z$ and the column of $y_0$ and the basis element $y_{2n+1}$ is drawn in the row of $y_{2n}$ and the column of the second copy of $z$.

Define an endomorphism of $G_\infty$ by with the same rules as in $F_\infty$, with the convention that $z$ assumes the roles of both $x_{-1}$ and $x_{2n+1}$. We can always do that since $z$, or both of its copies, lies strictly below and to the left of both $x_{-1}$ and $x_{2n+1}$.

Note that $G_\infty$ is a free chain complex over $\mathcal{R}_\infty$ for the same reason that $F_\infty$ is. To be explicit: we have slightly changed the $\Z\oplus\Z$ gradings of the generators and we have identified $x_{-1}$ with $x_{2n+1}$, but the differential $\partial_{G_\infty}$ connects the same pairs of generators as $\partial_{F_\infty}$, so $\partial_{G_\infty}^2=0$. After setting $U=0$ and ignoring $V$-torsion, the homology of $G_\infty$ is generated by $x_0$ and after setting $V=0$ and ignoring $U$-torsion, the homology of $G_\infty$ is generated by $x_{2n}$. Finally, our construction is symmetric: if $F_2$ is symmetric, then $F_\infty$ and $G_\infty$ are symmetric as well. Therefore, the local equivalence class of $C(a_1, \dots, a_{2n})$ is algebraically realized by the complex $G_\infty$.
 \end{proof}
\begin{corollary}\label{corollary}
    The local equivalence class of $C(a_1, \dots, a_{2n})$ with $|a_i|\geq 2$ for all $i$ is algebraically realizable.
\end{corollary}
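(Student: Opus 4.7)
The plan is to invoke Proposition \ref{proposition}, which states that the local equivalence class of $C(a_1, \dots, a_{2n})$ is algebraically realizable if and only if $C(a_1, \dots, a_{2n})$ itself is partially realizable over $\mathcal{R}_2 = \F[U,V]/(U^2V^2)$. So the goal reduces to producing a lift of $C(a_1, \dots, a_{2n})$ to a chain complex over $\mathcal{R}_2$ sharing the same distinguished basis.

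The natural candidate is the tautological lift: keep the basis $\{x_0, \dots, x_{2n}\}$ and define the endomorphism $\partial$ by the exact same formulas as in $C(a_1, \dots, a_{2n})$, now interpreted $\mathcal{R}_2$-linearly rather than $\mathcal{R}_1$-linearly. Since each arrow has the form $U^{|a_i|}$ or $V^{|a_i|}$, and these monomials are nonzero in $\mathcal{R}_2$, the lift is well-defined as a module endomorphism. The only thing to verify is that $\partial^2 = 0$ over $\mathcal{R}_2$.

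The key observation is structural. By the definition of a standard complex, arrows alternate between $U$-arrows (at odd indices) and $V$-arrows (at even indices), and each basis element $x_i$ is adjacent to at most one arrow of each type. In particular, any composable pair of arrows in the complex consists of exactly one $U$-arrow (of length $|a_j|$ for some odd $j$) and one $V$-arrow (of length $|a_k|$ for some even $k$). Hence every term appearing in $\partial^2 x_i$ has the form $U^{|a_j|} V^{|a_k|} x_l$. Under the hypothesis $|a_i| \geq 2$ for all $i$, both exponents are at least $2$, and the monomial $U^{|a_j|}V^{|a_k|}$ is divisible by $U^2V^2$, hence vanishes in $\mathcal{R}_2$.

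Therefore $\partial^2 = 0$ holds automatically over $\mathcal{R}_2$, the tautological lift is a partial realization of $C(a_1, \dots, a_{2n})$, and Proposition \ref{proposition} delivers the corollary. There is essentially no obstacle to overcome; all the heavy lifting was done in Proposition \ref{proposition}, and the entire content of the corollary is the elementary observation that the alternating $U$/$V$ structure of standard complexes makes reduction modulo $U^2V^2$ forgiving enough to accommodate any standard complex whose arrow lengths all meet the threshold $|a_i| \geq 2$.
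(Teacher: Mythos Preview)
Your proposal is correct and follows exactly the paper's approach: the paper's proof is the one-line observation that $C(a_1,\dots,a_{2n})$ is partially realizable when $|a_i|\geq 2$ for all $i$, followed by an appeal to Proposition~\ref{proposition}. You have simply spelled out the reason behind that observation---namely that consecutive arrows in a standard complex alternate $U$/$V$ type, so every term of $\partial^2$ is divisible by $U^2V^2$---which the paper leaves implicit.
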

\begin{proof}
    The standard complex $C(a_1, \dots, a_{2n})$ is partially realizable provided $|a_i|\geq 2$ for all $i$, so this follows immediately from Proposition \ref{proposition}.
\end{proof}
Proposition \ref{proposition} reduces the question of algebraic realizability of local equivalence classes to an easier question of partial realizability of actual standard or extended standard complexes. All that is left is to provide an algorithm that solves the latter question and this is what we do now.

\begin{algorithm}\label{algorithm}
Let $C(a_1, \dots, a_{2n})$ be a standard complex considered as a free $\mathcal{R}_2$-module with an endomorphism and a distinguished basis $(C, \partial, B)$ where $B = \{x_{0}, \dots, x_{2n}\}$. We first describe the algorithm that determines when $C$ is partially realizable and later prove its correctness.

We begin with the $\mathcal{R}_2$-module $C$. At each step, if we encounter a particular situation, we try to augment the differential slightly by adding a diagonal arrow between two of its generators. More precisely; if there exist $x_i$, $x_j \in B$ such that $\langle \partial^2 x_i, U^aV^b x_j\rangle = 1$ where $1 \in \{a, b\}$, then at least one of the two arrows connecting them must be non-diagonal (if they are both diagonal, then $a, b \geq 2$). We distinguish between four cases:
\begin{enumerate}
    \item $b=1$: In this case, one of the two arrows must be horizontal.
    \begin{enumerate}
        \item The first arrow is horizontal. Depending on the parity of $i$, the horizontal arrow goes from $x_i$ to $x_{i\pm 1}$ and then there is a diagonal or a vertical arrow to $x_j$. The only way such a sequence of arrows can come from a chain complex over $\mathcal{R}_2$ is if the unique horizontal arrow adjacent to $x_j$ has both an appropriate length (less than $a$) and direction (into $x_j$) as depicted in the figure below. In this case, add a diagonal arrow from $x_i$ to $x_{j\pm 1}$. With this modification, we have $\langle \partial^2 x_i, U^aV^b x_j \rangle=0$.
        $$
        \begin{tikzpicture}
            \draw[step=1.0,gray,thin] (0.5,0.5) grid (7.5,2.5);
            \draw[black, very thick] (7,2) -- (3,2) -- (1,1) -- (4,1);
            \draw[black, very thick, dashed] (4,1) -- (7,2);
            \filldraw[black] (7,2) circle (2pt) node[anchor=south west]{$x_i$};
            \filldraw[black] (3,2) circle (2pt) node[anchor=south west]{$x_{i\pm 1}$};
            \filldraw[black] (1,1) circle (2pt) node[anchor=south east]{$x_j$};
            \filldraw[black] (4,1) circle (2pt) node[anchor=south east]{$x_{j\pm 1}$};
        \end{tikzpicture}
        $$
        \item The second arrow is horizontal. Depending on the parity of $j$, the horizontal arrow goes from $x_{j\pm 1}$ to $x_j$ and then there is a diagonal or a vertical arrow from $x_i$ to $x_{j\pm1}$. The only way such a sequence of arrows can come from a chain complex over $\mathcal{R}_2$ is if the unique horizontal arrow adjacent to $x_i$ has both an appropriate length (less than $a$) and direction (out of $x_i$). In this case, add a diagonal arrow from $x_{i \pm 1}$ to $x_j$. With this modification, we have $\langle \partial^2 x_i, U^aV^b x_j \rangle=0$.
    \end{enumerate}
    \item $a=1$: In this case, one of the two arrows must be vertical. The analysis of the situation is completely analogous to the analysis in the horizontal case.
\end{enumerate}
Irrespective of the case we encounter, we draw the prescribed arrow and thus obtain a new free $\mathcal{R}_2$-module with an endomorphism and a distinguished basis $(C, \widetilde{\partial}, B)$ in which $\langle \widetilde{\partial}^2 x_i, U^aV^b x_j \rangle=0$. Note that the distinguished basis has remained the same, there has only been a slight augmentation of the endomorphism. If there exist standard generators $x_i$, $x_j \in B$ in the updated complex such that $\langle \widetilde{\partial}^2 x_i, U^aV^b x_j\rangle = 1$ where $1 \in \{a, b\}$, then we try to repeat the steps above and draw a new arrow. If possible, keep going until $\langle \widetilde{\partial}^2 x_i, U^aV^b x_j\rangle = 0$ for all pairs of standard generators $x_i$, $x_j$ and all $a, b$ with $1\in\{a,b\}$. Once that is achieved, the algorithm terminates and produces a partial realization of $C(a_1, \dots, a_{2n})$. By Proposition \ref{proposition}, the local equivalence class of the standard complex $C(a_1, \dots, a_{2n})$ is algebraically realizable.

\vspace{1em}

It can also happen that, at some point, an arrow cannot be added to the complex as prescribed. In this case, the local equivalence class of the complex is not algebraically realizable. To see this, note that there are no alternative ways of extending $C$ to a chain complex over $\mathcal{R}_2$. In each of the four cases, we can only add diagonal arrows adjacent to the standard generators. Together with the condition $1 \in \{a,b\}$, this implies that the arrows we add in the algorithm \emph{must} be added. 

\vspace{1em}

The output of the algorithm is independent of the order in which we are adding the arrows. If there are many arrows that can be added at a certain stage, we might as well add all of them simultaneously, since they don't interact with each other by the previous paragraph and will have to be added eventually. At this point, it is possible that some of the diagonal arrows that move in both directions by exactly $1$ might have created the need for further arrows to be added. We add those in stage two and proceed similarly for as long as it is needed. In a geometric language, the complex $C(a_1, \dots, a_{2n})$ contains some width 1 tunnels as depicted in Figure \ref{fig:Width 1 tunnel}. Algorithm \ref{algorithm} terminates once all of them have been added the diagonal arrows and the exact order in which this has happened is not important.
\begin{figure}[t]
    \centering
    \begin{tikzpicture}[scale=0.8]
            \draw[step=1.0,gray,thin] (-1.5,-0.5) grid (4.5,5.5);
            \draw[black, very thick] (-1,0) -- (3,0) -- (3,2) -- (-1,2) -- (-1, 5) -- (0,5) -- (0,3) -- (4,3) -- (4,1) -- (2,1);
            \draw[black, very thick, dashed] (-1,0) -- (2,1);
            \draw[black, very thick, dashed] (3,0) -- (4,1);
            \draw[black, very thick, dashed] (3,2) -- (4,3);
            \draw[black, very thick, dashed] (-1,2) -- (0,3);
        \end{tikzpicture}
    \caption{A width 1 tunnel in which the arrows that will be added by Algorithm \ref{algorithm} are dashed. A standard complex can contain many width 1 tunnels and they do not interact with each other.}
    \label{fig:Width 1 tunnel}
\end{figure}

\vspace{1em}

Uniqueness guarantees symmetry. If $C(a_1, \dots, a_{2n})$ is symmetric, the output of the algorithm is symmetric as well.

\vspace{1em}

Finally, let's obtain an upper bound on the number of arrows drawn by the algorithm. There are $2n+1$ generators $x_0, \dots, x_{2n}$ and each one has an associated $\gr_U$. Since $\gr_U(\partial)=-1$, arrows can only be drawn between the generators with $\gr_U$ of different parity. In other words, the number of arrows drawn by the algorithm is bounded by the maximal number of edges in a bipartite graph with $2n+1$ vertices, which is $n(n+1)=n^2+n$. Note that this is a very coarse bound and there will be much fewer arrows added in general.
\end{algorithm}

\begin{proof}[Proof of Theorem \ref{theorem}]
This is the content of Proposition \ref{proposition} and Algorithm \ref{algorithm}. The special case is Corollary \ref{corollary}.
\end{proof}
This completes the classification of algebraically realizable local equivalence classes of knot-like complexes. In particular, it answers a slightly harder version of Question 11.1 of \cite{dai2021more}.

\section{Examples}\label{section:Examples}
We give two demonstrations of our results.
\subsection{The local equivalence class of \texorpdfstring{$C(-1, 1, 2, -1, 1, 2)$}{C(-1, 1, 2, -1, 1, 2)}}
The standard complex $C(-1, 1, 2, -1, 1, 2)$ with a distinguished basis $\{ x_0, \dots, x_6 \}$ is drawn in Figure \ref{fig:Example 1a}. Since we have $\langle \partial^2x_3, U^2Vx_1 \rangle = 1$, the algorithm requires that we add a diagonal arrow from $x_3$ to $x_0$. It is drawn dashed in Figure \ref{fig:Example 1b}. Next, we notice that $\langle \partial^2x_6, UV^2x_4 \rangle = 1$ so the algorithm requires that we add a diagonal arrow from $x_6$ to $x_3$, which is drawn dashed in Figure \ref{fig:Example 1c}.

\begin{figure}[t]
    \begin{subfigure}[b]{0.30\textwidth}
        \centering
        \resizebox{\linewidth}{!}{
        \begin{tikzpicture}[scale=1.0]
            \draw[step=1.0,gray,thin] (0.5,0.5) grid (4.5,3.5);
            \draw[black, very thick] (2, 1) -- (1, 1) -- (1, 2) -- (3, 2) -- (3, 1) -- (4,1) -- (4,3);
            \filldraw[black] (2,1) circle (2pt) node[anchor=north west]{$x_0$};
            \filldraw[black] (1,1) circle (2pt) node[anchor=north west]{$x_1$};
            \filldraw[black] (1,2) circle (2pt) node[anchor=north west]{$x_2$};
            \filldraw[black] (3,2) circle (2pt) node[anchor=north west]{$x_3$};
            \filldraw[black] (3,1) circle (2pt) node[anchor=north west]{$x_4$};
            \filldraw[black] (4,1) circle (2pt) node[anchor=north west]{$x_5$};
            \filldraw[black] (4,3) circle (2pt) node[anchor=north west]{$x_6$};
        \end{tikzpicture}}
        \caption{}
        \label{fig:Example 1a}
    \end{subfigure}
    \begin{subfigure}[b]{0.30\textwidth}
        \centering
        \resizebox{\linewidth}{!}{
        \begin{tikzpicture}[scale=1.0]
            \draw[step=1.0,gray,thin] (0.5,0.5) grid (4.5,3.5);
            \draw[black, very thick] (2, 1) -- (1, 1) -- (1, 2) -- (3, 2) -- (3, 1) -- (4,1) -- (4,3);
            \draw[black, dashed, very thick] (3,2) -- (2,1);
            \filldraw[black] (2,1) circle (2pt) node[anchor=north west]{$x_0$};
            \filldraw[black] (1,1) circle (2pt) node[anchor=north west]{$x_1$};
            \filldraw[black] (1,2) circle (2pt) node[anchor=north west]{$x_2$};
            \filldraw[black] (3,2) circle (2pt) node[anchor=north west]{$x_3$};
            \filldraw[black] (3,1) circle (2pt) node[anchor=north west]{$x_4$};
            \filldraw[black] (4,1) circle (2pt) node[anchor=north west]{$x_5$};
            \filldraw[black] (4,3) circle (2pt) node[anchor=north west]{$x_6$};
        \end{tikzpicture}}
        \caption{}
        \label{fig:Example 1b}
    \end{subfigure}
    \begin{subfigure}[b]{0.30\textwidth}
        \centering
        \resizebox{\linewidth}{!}{
        \begin{tikzpicture}[scale=1.0]
            \draw[step=1.0,gray,thin] (0.5,0.5) grid (4.5,3.5);
            \draw[black, very thick] (2, 1) -- (1, 1) -- (1, 2) -- (3, 2) -- (3, 1) -- (4,1) -- (4,3);
            \draw[black, very thick] (3,2) -- (2,1);
            \draw[black, dashed, very thick] (3,2) -- (4,3);
            \filldraw[black] (2,1) circle (2pt) node[anchor=north west]{$x_0$};
            \filldraw[black] (1,1) circle (2pt) node[anchor=north west]{$x_1$};
            \filldraw[black] (1,2) circle (2pt) node[anchor=north west]{$x_2$};
            \filldraw[black] (3,2) circle (2pt) node[anchor=north west]{$x_3$};
            \filldraw[black] (3,1) circle (2pt) node[anchor=north west]{$x_4$};
            \filldraw[black] (4,1) circle (2pt) node[anchor=north west]{$x_5$};
            \filldraw[black] (4,3) circle (2pt) node[anchor=north west]{$x_6$};
        \end{tikzpicture}}
        \caption{}
        \label{fig:Example 1c}
    \end{subfigure}
    \caption{The algorithm terminates after adding $2$ arrows. Since $\langle \partial^2x_6, U^3Vx_2 \rangle = 1$ at the final stage, the local equivalence class of the starting complex $C(-1, 1, 2, -1, 1, 2)$ is \emph{not} algebraically realizable.}
\end{figure}
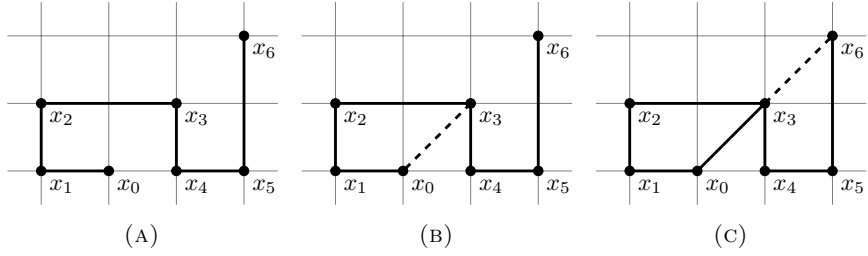

Continuing with the process, we notice that we have $\langle \partial^2x_6, U^3Vx_2 \rangle = 1$ in the updated complex. However, since there is no outgoing horizontal arrow from $x_6$, the algorithm stops and we conclude that the complex $C(-1, 1, 2, -1, 1, 2)$ is not partially realizable and hence the local equivalence class of $C(-1, 1, 2, -1, 1, 2)$ is not algebraically realizable.

\subsection{The local equivalence class of \texorpdfstring{$C(-1, 1, 2, -1, 1, 3)$}{C(-1, 1, 2, -1, 1,3)}}
The standard complex $C(-1, 1, 2, -1, 1, 3)$ with a distinguished basis $\{ x_0, \dots, x_6 \}$ is drawn in Figure \ref{fig:Example 2a}. Since we have $\langle \partial^2x_3, U^2Vx_1 \rangle = 1$, the algorithm requires that we add a diagonal arrow from $x_3$ to $x_0$. It is drawn dashed in Figure \ref{fig:Example 2b}. Next, we notice that $\langle \partial^2x_6, UV^3x_4 \rangle = 1$ so the algorithm requires that we add a diagonal arrow from $x_6$ to $x_3$, which is drawn dashed in Figure \ref{fig:Example 2c}.

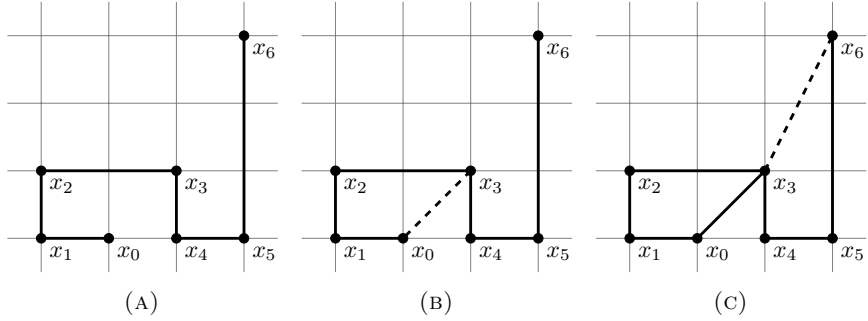
\begin{figure}[t]
    \begin{subfigure}[b]{0.30\textwidth}
        \centering
        \resizebox{\linewidth}{!}{
        \begin{tikzpicture}[scale=1.0]
            \draw[step=1.0,gray,thin] (0.5,0.5) grid (4.5,4.5);
            \draw[black, very thick] (2,1) -- (1, 1) -- (1, 2) -- (3, 2) -- (3, 1) -- (4,1) --(4,4);
            \filldraw[black] (2,1) circle (2pt) node[anchor=north west]{$x_0$};
            \filldraw[black] (1,1) circle (2pt) node[anchor=north west]{$x_1$};
            \filldraw[black] (1,2) circle (2pt) node[anchor=north west]{$x_2$};
            \filldraw[black] (3,2) circle (2pt) node[anchor=north west]{$x_3$};
            \filldraw[black] (3,1) circle (2pt) node[anchor=north west]{$x_4$};
            \filldraw[black] (4,1) circle (2pt) node[anchor=north west]{$x_5$};
            \filldraw[black] (4,4) circle (2pt) node[anchor=north west]{$x_6$};
        \end{tikzpicture}}
        \caption{}
        \label{fig:Example 2a}
    \end{subfigure}
    \begin{subfigure}[b]{0.30\textwidth}
        \centering
        \resizebox{\linewidth}{!}{
        \begin{tikzpicture}[scale=1.0]
            \draw[step=1.0,gray,thin] (0.5,0.5) grid (4.5,4.5);
            \draw[black, very thick] (2, 1) -- (1, 1) -- (1, 2) -- (3, 2) -- (3, 1) -- (4,1) -- (4,4);
            \draw[black, dashed, very thick] (3,2) -- (2,1);
            \filldraw[black] (2,1) circle (2pt) node[anchor=north west]{$x_0$};
            \filldraw[black] (1,1) circle (2pt) node[anchor=north west]{$x_1$};
            \filldraw[black] (1,2) circle (2pt) node[anchor=north west]{$x_2$};
            \filldraw[black] (3,2) circle (2pt) node[anchor=north west]{$x_3$};
            \filldraw[black] (3,1) circle (2pt) node[anchor=north west]{$x_4$};
            \filldraw[black] (4,1) circle (2pt) node[anchor=north west]{$x_5$};
            \filldraw[black] (4,4) circle (2pt) node[anchor=north west]{$x_6$};
        \end{tikzpicture}}
        \caption{}
        \label{fig:Example 2b}
    \end{subfigure}
    \begin{subfigure}[b]{0.30\textwidth}
        \centering
        \resizebox{\linewidth}{!}{
        \begin{tikzpicture}[scale=1.0]
            \draw[step=1.0,gray,thin] (0.5,0.5) grid (4.5,4.5);
            \draw[black, very thick] (2, 1) -- (1, 1) -- (1, 2) -- (3, 2) -- (3, 1) -- (4,1) -- (4,4);
            \draw[black, very thick] (3,2) -- (2,1);
            \draw[black, dashed, very thick] (3,2) -- (4,4);
            \filldraw[black] (2,1) circle (2pt) node[anchor=north west]{$x_0$};
            \filldraw[black] (1,1) circle (2pt) node[anchor=north west]{$x_1$};
            \filldraw[black] (1,2) circle (2pt) node[anchor=north west]{$x_2$};
            \filldraw[black] (3,2) circle (2pt) node[anchor=north west]{$x_3$};
            \filldraw[black] (3,1) circle (2pt) node[anchor=north west]{$x_4$};
            \filldraw[black] (4,1) circle (2pt) node[anchor=north west]{$x_5$};
            \filldraw[black] (4,4) circle (2pt) node[anchor=north west]{$x_6$};
        \end{tikzpicture}}
        \caption{}
        \label{fig:Example 2c}
    \end{subfigure}
    \caption{The algorithm, whose stages are depicted in (\textsc{a}), (\textsc{b}), and (\textsc{c}), terminates after adding $2$ arrows. Since $\partial^2=0$ over $\frac{\F[U,V]}{(U^2V^2)}$ at the final stage, the local equivalence class of the starting complex $C(-1, 1, 2, -1, 1, 3)$ is algebraically realizable.}
\end{figure}

At this point we can verify that $\partial^2=0$ over $\mathcal{R}_2$. This means that the algorithm stops and outputs the result. The complex $C(-1, 1, 2, -1, 1, 3)$ is partially realizable so the local equivalence class of $C(-1, 1, 2, -1, 1, 3)$ is algebraically realizable. A $\Z\oplus\Z$ graded chain complex over $\F[U,V]$ with the correct homology realizing it is constructed using Lemma \ref{lemma:partially real->extended partially real} and Lemma \ref{lemma:extended partially real->local class real} and drawn in Figure \ref{fig:Realization of C(-1, 1, 2, -1, 1, 3)}.

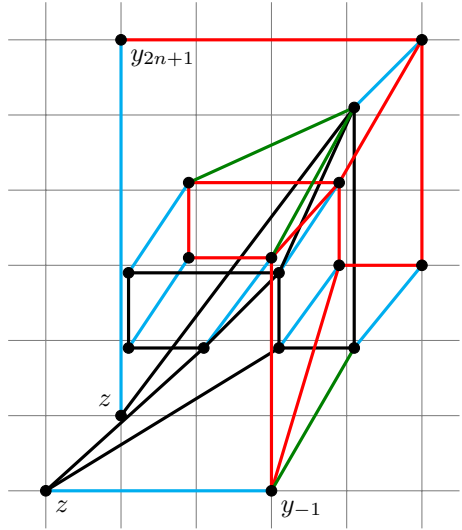
\begin{figure}[t]
    \centering
    \begin{tikzpicture}[scale=1.0]
            \def\a{0.1}
            
            \draw[step=1.0,gray,thin] (-0.5,-1.5) grid (5.5,5.5);
            \draw[cyan, very thick] (2+\a,1-\a) -- (3,2+\a);
            \draw[cyan, very thick] (1+\a,1-\a) -- (2-\a,2+\a);
            \draw[cyan, very thick] (1+\a,2-\a) -- (2-\a,3+\a);
            \draw[cyan, very thick] (3+\a,2-\a) -- (4-\a,3+\a);
            \draw[cyan, very thick] (3+\a,1-\a) -- (4-\a,2);
            \draw[cyan, very thick] (4+\a,1-\a) -- (5,2);
            \draw[cyan, very thick] (4+\a,4+\a) -- (5,5);
            \draw[cyan, very thick] (3, -1) -- (0, -1); 
            \draw[cyan, very thick] (1,5) -- (1,0);     

            \draw[black, very thick] (0,-1) -- (2+\a,1-\a);
            \draw[red, very thick] (5,5) -- (1,5);
            \draw[black, very thick] (1,0) -- (4+\a,4+\a);
            
            \draw[black, very thick] (2+\a, 1-\a) -- (1+\a, 1-\a) -- (1+\a, 2-\a) -- (3+\a, 2-\a) -- (3+\a, 1-\a) -- (4+\a,1-\a) -- (4+\a,4+\a);
            \draw[black, very thick] (3+\a,2-\a) -- (2+\a,1-\a);
            \draw[black, very thick] (3+\a,2-\a) -- (4+\a,4+\a);
            \draw[black, very thick] (0, -1) -- (3+\a, 1-\a);

            \draw[red, very thick] (3, -1) -- (3, 2+\a) -- (2-\a, 2+\a) -- (2-\a, 3+\a) -- (4-\a, 3+\a) -- (4-\a, 2) -- (5,2) -- (5,5);
            \draw[red, very thick] (3, -1) -- (4-\a, 2);
            \draw[red, very thick] (4-\a,3+\a) -- (3,2+\a);
            \draw[red, very thick] (4-\a,3+\a) -- (5,5);

            \draw[dark green, very thick] (4+\a,4+\a) -- (2-\a,3+\a);
            \draw[dark green, very thick] (4+\a,4+\a) -- (3,2+\a);
            \draw[dark green, very thick] (3,-1) -- (4+\a,1-\a);

            \filldraw[black] (2+\a,1-\a) circle (2pt);
            \filldraw[black] (1+\a,1-\a) circle (2pt);
            \filldraw[black] (1+\a,2-\a) circle (2pt);
            \filldraw[black] (3+\a,2-\a) circle (2pt);
            \filldraw[black] (3+\a,1-\a) circle (2pt);
            \filldraw[black] (4+\a,1-\a) circle (2pt);
            \filldraw[black] (4+\a,4+\a) circle (2pt);

            \filldraw[black] (3,2+\a) circle (2pt);
            \filldraw[black] (2-\a,2+\a) circle (2pt);
            \filldraw[black] (2-\a,3+\a) circle (2pt);
            \filldraw[black] (4-\a,3+\a) circle (2pt);
            \filldraw[black] (4-\a,2) circle (2pt);
            \filldraw[black] (5,2) circle (2pt);
            \filldraw[black] (5,5) circle (2pt);

            \filldraw[black] (1,0) circle (2pt) node[anchor =  south east]{$z$};
            \filldraw[black] (0,-1) circle (2pt) node[anchor = north west]{$z$};
            \filldraw[black] (3,-1) circle (2pt) node[anchor = north west]{$y_{-1}$};
            \filldraw[black] (1,5) circle (2pt) node[anchor = north west]{$y_{2n+1}$};
            
        \end{tikzpicture}
    \caption{Algebraic realization of the local equivalence class of $C(-1, 1, 2, -1, 1, 3)$.}
    \label{fig:Realization of C(-1, 1, 2, -1, 1, 3)}
\end{figure}

\bibliography{mybib.bib}

\begin{thebibliography}{DHST21}

\bibitem[DHST21]{dai2021more}
Irving Dai, Jennifer Hom, Matthew Stoffregen, and Linh Truong.
\newblock More concordance homomorphisms from knot floer homology.
\newblock {\em Geometry \& Topology}, 25(1):275--338, 2021.

\bibitem[Ghi08]{ghiggini2008knot}
Paolo Ghiggini.
\newblock Knot floer homology detects genus-one fibred knots.
\newblock {\em American journal of mathematics}, 130(5):1151--1169, 2008.

\bibitem[HM17]{hendricks2017involutive}
Kristen Hendricks and Ciprian Manolescu.
\newblock Involutive heegaard floer homology.
\newblock {\em Duke Mathematical Journal}, 166(7):1211--1299, 2017.

\bibitem[Hom14]{hom2014epsilon}
Jennifer Hom.
\newblock Bordered heegaard floer homology and the tau-invariant of cable
  knots.
\newblock {\em Journal of Topology}, 7(2):287--326, 2014.

\bibitem[Hom17]{hom2017survey}
Jennifer Hom.
\newblock A survey on heegaard floer homology and concordance.
\newblock {\em Journal of Knot Theory and Its Ramifications}, 26(02):1740015,
  2017.

\bibitem[HW14]{hom2014nu+}
Jennifer Hom and Zhongtao Wu.
\newblock Four-ball genus bounds and a refinement of the ozsv{\'a}th-szab{\'o}
  tau-invariant.
\newblock {\em arXiv preprint arXiv:1401.1565}, 2014.

\bibitem[HW18]{hedden2018geography}
Matthew Hedden and Liam Watson.
\newblock On the geography and botany of knot floer homology.
\newblock {\em Selecta Mathematica}, 24(2):997--1037, 2018.

\bibitem[Juh08]{juhasz2008floer}
Andr{\'a}s Juh{\'a}sz.
\newblock Floer homology and surface decompositions.
\newblock {\em Geometry \& Topology}, 12(1):299--350, 2008.

\bibitem[KP16]{kim2016infinite}
Min~Hoon Kim and Kyungbae Park.
\newblock An infinite-rank summand of knots with trivial alexander polynomial.
\newblock {\em arXiv preprint arXiv:1604.04037}, 2016.

\bibitem[Krc18]{krcatovich2018restriction}
David Krcatovich.
\newblock A restriction on the alexander polynomials of l-space knots.
\newblock {\em Pacific J. Math}, 297:117--129, 2018.

\bibitem[Ni07]{ni2007knot}
Yi~Ni.
\newblock Knot floer homology detects fibred knots.
\newblock {\em Inventiones mathematicae}, 170(3):577--608, 2007.

\bibitem[OS03]{ozsvath2003tau}
Peter Ozsv{\'a}th and Zolt{\'a}n Szab{\'o}.
\newblock Knot floer homology and the four-ball genus.
\newblock {\em Geometry \& Topology}, 7(2):615--639, 2003.

\bibitem[OS04a]{OS2004holomorphicDisksAndGenusBounds}
Peter Ozsv{\'a}th and Zolt{\'a}n Szab{\'o}.
\newblock Holomorphic disks and genus bounds.
\newblock {\em Geometry \& Topology}, 8(1):311--334, 2004.

\bibitem[OS04b]{ozsvath2004holomorphicKnots}
Peter Ozsv{\'a}th and Zolt{\'a}n Szab{\'o}.
\newblock Holomorphic disks and knot invariants.
\newblock {\em Advances in Mathematics}, 186(1):58--116, 2004.

\bibitem[OS04c]{ozsvath2004holomorphicSequel}
Peter Ozsv{\'a}th and Zolt{\'a}n Szab{\'o}.
\newblock Holomorphic disks and three-manifold invariants: properties and
  applications.
\newblock {\em Annals of Mathematics}, pages 1159--1245, 2004.

\bibitem[OS04d]{ozsvath2004holomorphic}
Peter Ozsv{\'a}th and Zolt{\'a}n Szab{\'o}.
\newblock Holomorphic disks and topological invariants for closed
  three-manifolds.
\newblock {\em Annals of Mathematics}, pages 1027--1158, 2004.

\bibitem[OS05]{OS-HFKandUnknottingNumber}
Peter Ozsv{\'a}th and Zolt{\'a}n Szab{\'o}.
\newblock Knots with unknotting number one and heegaard floer homology.
\newblock {\em Topology}, 44(4):705--745, 2005.

\bibitem[OS08a]{OS-HFKandIntegerSurgeries}
Peter Ozsv{\'a}th and Zolt{\'a}n Szab{\'o}.
\newblock Knot floer homology and integer surgeries.
\newblock {\em Algebraic \& Geometric Topology}, 8(1):101--153, 2008.

\bibitem[OS08b]{OS2008linkThurstonNorm}
Peter Ozsv{\'a}th and Zolt{\'a}n Szab{\'o}.
\newblock Link floer homology and the thurston norm.
\newblock {\em Journal of the American Mathematical Society}, 21(3):671--709,
  2008.

\bibitem[OS10]{OS-HFKandRationalSurgeries}
Peter~S Ozsv{\'a}th and Zolt{\'a}n Szab{\'o}.
\newblock Knot floer homology and rational surgeries.
\newblock {\em Algebraic \& Geometric Topology}, 11(1):1--68, 2010.

\bibitem[OSS17]{OSS2017upsilon}
Peter~S Ozsv{\'a}th, Andr{\'a}s~I Stipsicz, and Zolt{\'a}n Szab{\'o}.
\newblock Concordance homomorphisms from knot floer homology.
\newblock {\em Advances in Mathematics}, 315:366--426, 2017.

\bibitem[Pop23]{popovic2023link}
David Popovi\'c.
\newblock Link floer homology splits into standard complexes and local systems.
\newblock {\em arXiv preprint arXiv:??}, 2023.

\bibitem[Ras03]{rasmussen2003floer}
Jacob~Andrew Rasmussen.
\newblock {\em Floer homology and knot complements}.
\newblock Harvard University, 2003.

\bibitem[Zem19]{zemke2019connected}
Ian Zemke.
\newblock Connected sums and involutive knot floer homology.
\newblock {\em Proceedings of the London Mathematical Society},
  119(1):214--265, 2019.

\end{thebibliography}
\bibliographystyle{alpha}

\end{document}